\documentclass[a4paper,twoside,11pt,reqno]{amsart}
\usepackage{fullpage}
\usepackage[T1]{fontenc}
\usepackage[utf8]{inputenc}

\usepackage{hyperref}
\usepackage[slantedGreeks, partialup, noDcommand]{kpfonts}
\usepackage{graphicx}
\usepackage[mathscr]{euscript}
\usepackage{caption}
\usepackage{tikz}
\usepackage{appendix}

\usetikzlibrary{trees}

\hypersetup{ocgcolorlinks=true,allcolors=testc}
\hypersetup{
	colorlinks   = true,
	citecolor    = black
}
\hypersetup{linkcolor=black}
\hypersetup{urlcolor=black}

\newtheorem{theorem}{Theorem}
\newtheorem{lemma}{Lemma}

\newtheorem{proposition}{Proposition}
\newtheorem{corollary}[theorem]{Corollary}

\newtheorem{remark}[theorem]{Remark}
\newtheorem*{remark*}{Remark}

\newtheorem*{definition*}{Definition}

\theoremstyle{definition}

\theoremstyle{remark}

\renewcommand{\epsilon}{\varepsilon}

\renewcommand{\phi}{\varphi}
\newcommand{\R}{\mathbb{R}}

\newcommand{\Z}{\mathbb{Z}}

\DeclareMathOperator{\im}{Im}

\DeclareMathOperator{\supp}{supp}
\DeclareMathOperator{\sgn}{sgn}

\DeclareMathOperator{\E}{\mathbb{E}}

\def \R {\mathbb{R}}
\def \Q {\mathbb{Q}}
\def \Z {\mathbb{Z}}

\def \tran {\mathsf{T}}

\def\bal{\begin{align*}}
\def\eal{\end{align*}}

\DeclareMathOperator{\rank}{rank}
\DeclareMathOperator{\diag}{diag}
\DeclareMathOperator{\Cok}{Cok}
\DeclareMathOperator{\Cons}{Cons}

\author{Xinyuan Xie}
\address{(X.~X.) Department of Mathematics, University of California, Irvine, CA 92697, USA}
\email{xinyuax7@uci.edu}

\title{Boolean threshold functions, neuron capacity, and memory retrieval}

\thanks{}

\begin{document}

    \maketitle

\begin{abstract}
\setlength{\abovedisplayskip}{4pt}
\setlength{\belowdisplayskip}{4pt}
\setlength{\abovedisplayshortskip}{3pt}
\setlength{\belowdisplayshortskip}{3pt}
How much information can a single neuron remember? How many memories can neural networks retrieve without creating false memories? These questions are related to a basic question: how many Boolean threshold functions $f(x)=\operatorname{sgn}(a_0+\langle a,x\rangle)$, $x\in\{-1,1\}^n$, are there? In this paper, we show that the number $T_n$ of distinct Boolean threshold functions is
\[
T_n=2\binom{2^n-1}{n}\bigl(1+O(n^{-99})\bigr).
\]
Equivalently, the capacity of a single threshold neuron is $n^2-\log_2(n!)+1+O(n^{-99})$ bits, improving the $O(n)$ error term in the result of Kahn--Koml\'os--Szemer\'edi to $O(n^{-99})$. To prove this, we show that, for $1\le r\le n-1$, and $v_1,\ldots,v_r$ are chosen at random from $\{-1,1\}^n$,
\[
\mathbb P\!\left\{
\langle v_1,\ldots,v_r\rangle\cap\{-1,1\}^n
=\{\pm v_1,\ldots,\pm v_r\}
\right\}
=1-O(n^{-99}).
\]
In the context of the Kanter--Sompolinsky Hamiltonian for memory retrieval, this identifies $r=n-1$ as a sharp threshold, at which, for almost every collection of $r$ memories, the only ground states are these memories and their negatives, confirming a weaker form of the Kalai--Linial--Odlyzko conjecture. It also settles a recent open problem posed by M.~Anthony on the specification number of Boolean threshold functions. In addition, we show that, for every $1\le r\le n-1$,
\[
\mathbb P\{v_1,\ldots,v_r\text{ are linearly dependent}\}
=2\binom r2\,2^{-n}+O\!\left(2^{-n}e^{-cn}\right),
\]
confirming a conjecture of Kahn--Koml\'os--Szemer\'edi.
\end{abstract}

\bigskip

{\footnotesize
\noindent {\em 2020 Mathematics Subject Classification.} 94D10, 52C35, 60B20, 15B52, 68T07.

\noindent {\em Key words.} Boolean threshold functions, neuron capacity, memory retrieval, associative memory, Hopfield networks, random matrices, specification numbers.
}

\section{Introduction}\label{sec:intro}

\subsection{Boolean threshold functions}\label{sec:intro-count}
A Boolean function $f:\{-1,1\}^n\to\{-1,1\}$ is called a \emph{Boolean threshold function}, or a \emph{linear threshold function}, if it admits the representation
\begin{equation}\label{eq:ltf}
f(x)=\sgn(a_0+a_1x_1+\cdots+a_nx_n),
\end{equation}
for some $a_0,\ldots,a_n\in\R$.

These functions have been used to model neural activity back to McCulloch and Pitts~\cite{MP}. Since then, threshold functions became building blocks and the first basic object for theoretical analysis in neural networks~\cite{Rosenblatt,Hopfield}, pattern recognition~\cite{Cover}, learning theory~\cite{Bylander,KlivansODonnellServedio}, and many other areas, see the references in~\cite{BVthreshold} for more applications. 

Surprisingly, the number $T_n$ of distinct Boolean threshold functions on $\{-1,1\}^n$, such a basic quantity, remains unknown. There has been extensive activity in determining this quantity asymptotically since the 1960s. The classical upper bound
\begin{equation}\label{eq:classical-count-upper}
T_n\le 2\sum_{j=0}^n\binom{2^n-1}{j},
\end{equation}
was obtained in the early 1960s by several authors~\cite{Cameron,Joseph,Winder}, and their attributions can be found in ~\cite{Cover}. For the lower bound side, Muroga~\cite{Muroga} proved the lower bound $T_n\ge 2^{0.33048n^2}$, and Yajima and Ibaraki~\cite{YI} improved it to $T_n\ge 2^{n(n-1)/2+8}$ for $n\ge6$. Using Odlyzko's estimate for spans of random sign vectors~\cite{Odlyzko}, Zuev~\cite{Zuev} obtained
\[
T_n\ge 2^{\,n^2-10n^2/\ln n+O(n\ln n)}.
\]
The last rigorous improvement before the present work is due to Kahn, Koml\'os, and Szemer\'edi~\cite[Section~4]{KKS}, who sharpened the lower bound to
\begin{equation}\label{eq:kks-count}
T_n\ge 2^{\,n^2-n\log_2 n-O(n)}.
\end{equation}
However, the bounds~\eqref{eq:classical-count-upper} and~\eqref{eq:kks-count} still leave an exponential multiplicative uncertainty in $T_n$. More recently, Irmatov~\cite{Irmatov} claimed the asymptotic formula $T_n\sim 2\binom{2^n-1}{n}$. As noted in~\cite{JSSsing}, experts have identified unresolved issues in that paper, and we do not see how the current arguments are able to derive such asymptotics. In the next theorem, we show that this asymptotics is indeed correct.

\begin{samepage}
\begin{theorem}[Counting Boolean threshold functions]\label{thm:count}
For every integer $n\ge1$, the number $T_n$ of Boolean threshold functions on $\{-1,1\}^n$ satisfies
\begin{equation}\label{eq:count}
T_n=2\binom{2^n-1}{n}\bigl(1+O(n^{-99})\bigr),
\end{equation}
where the implicit constant is absolute.
\end{theorem}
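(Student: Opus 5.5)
Throughout, $\Omega=\{-1,1\}^n$ and $N=2^n$. Threshold functions correspond to cells of the central hyperplane arrangement $\mathcal A=\{H_x\}_{x\in\Omega}$ in $\R^{n+1}$, where $H_x=\{(a_0,a)\colon a_0+\langle a,x\rangle=0\}$ and distinct (generic) cells give distinct functions. So $T_n$ equals the number of regions of $\mathcal A$. Antipodal points give distinct hyperplanes, since $H_x\neq H_{-x}$ because of the $a_0$ coordinate. We therefore have $N$ hyperplanes in $\R^{n+1}$.

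By Zaslavsky's theorem, the number of regions is
\[
\sum_{S}(-1)^{|S|-\rank S}\,\mu\text{-type terms}=\sum_{k}\sum_{\text{flats }F,\ \operatorname{codim}F=k}|\mu(\hat0,F)|.
\]
For a generic arrangement of $N$ hyperplanes in $\R^{n+1}$, this sum equals $2\sum_{j=0}^{n}\binom{N-1}{j}$. The strategy is to compare $\mathcal A$ with the generic case, via Whitney's formula $r(\mathcal A)=\sum_{S\subseteq\Omega\text{ independent-ish}}\dots$. Concretely, I would use the Cover / Winder deletion–restriction recursion. Fix one hyperplane and count regions as $C(N,n+1)=C(N-1,n+1)+C(\text{restriction})$. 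Iterating this yields
\[
r(\mathcal A)=2\sum_{j=0}^{n}\#\{\text{$j$-subsets } S\subseteq\Omega\setminus\{x_0\}\text{ that are ``good''}\},
\]
up to correction terms. Here ``good'' means the vectors $(1,x)$, $x\in S\cup\{x_0\}$, are linearly independent, and their span meets no other $(1,y)$.

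Equivalently, and this is the cleaner route, I count the vertices (rank-$n$ flats, i.e.\ one-dimensional lines) of $\mathcal A$. Each region of this central arrangement is a cone whose boundary contains extreme rays. Double counting with the Euler relation shows that
\[
T_n=2\sum_{\ell}\bigl(\text{number of }n\text{-subsets spanning }\ell\bigr)^{\text{weighted}}
\]
is dominated by lines spanned by exactly $n$ of the hyperplanes. So the lower bound reduces to the following. Among the $\binom{N-1}{n}$ $n$-subsets $S$ (with $x_0$ fixed), all but a fraction $O(n^{-99})$ are such that $\{(1,x)\}_{x\in S\cup\{x_0\}}$ is independent and its span contains no further lifted cube point. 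Each such $S$ contributes a simple vertex, and hence a distinct pair of regions.

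The key step is exactly the probabilistic statement announced in the abstract. Lifted vectors $(1,x)$ correspond, after multiplying by signs, to random $\pm1$ vectors in dimension $n+1$. I would invoke the result that for $r\le n$ random sign vectors in $\{-1,1\}^{n+1}$, with probability $1-O(n^{-99})$ the span meets the cube only in $\pm v_i$. This is the $r=(n+1)-1$ case. It gives a lower bound $T_n\ge 2\binom{N-1}{n}(1-O(n^{-99}))$. The conditioning from sampling distinct subsets of $\Omega$ rather than i.i.d.\ vectors changes probabilities by only $O(n^2/N)$.

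For the upper bound, the classical bound gives
\[
2\sum_{j\le n}\binom{N-1}{j}=2\binom{N-1}{n}\bigl(1+O(n/N)\bigr),
\]
because the lower terms are smaller by factors of order $n/N$. This already matches the claimed form. Combining the two bounds yields the theorem. The implicit constant is absolute, and small $n$ are absorbed into the constant.

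The main obstacle is the span-counting estimate, namely ruling out extra cube points in a random $(n-1)$-dimensional span with polynomially small failure probability. This is the near-critical regime, beyond Odlyzko's bound, which only handles $r\le n-Cn/\log n$. I would attack it by conditioning on the subspace normal vector. The normal is a single vector $w$ orthogonal to the hyperplane spanned. I would then use inverse Littlewood–Offord arguments in the style of Kahn–Komlós–Szemerédi and Tao–Vu. For structured $w$, meaning $w$ has few nonzero coordinates or additive structure, I would count directly. For unstructured $w$, the number of $y\in\Omega$ with $\langle w,y\rangle=0$ is small, and a second-moment or union bound over these $y$ gives the result.

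A second, more routine obstacle is making the vertex–region correspondence exact. One must check that each ``good'' $n$-subset contributes exactly the generic count locally. One must also check that non-generic configurations can only decrease the count relative to the upper bound. This is standard from Zaslavsky's theorem, since the Möbius contributions are monotone under degeneration.
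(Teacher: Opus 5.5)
Your derivation of the count from the span estimate follows the paper. The upper bound is the classical $2\sum_{j\le n}\binom{2^n-1}{j}=2\binom{2^n-1}{n}(1+O(n2^{-n}))$. The lower bound counts good $n$-subsets avoiding $x_0$, supplied by Theorem~\ref{thm:span} in dimension $n+1$ with $n$ vectors, after normalizing first coordinates and discarding repetitions and $x_0$ at cost $O(n^22^{-n})$.

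Two points in the reduction need repair.
\begin{itemize}
\item Your definition of ``good'' is wrong as written. The $n+1$ independent lifts $(1,x)$, $x\in S\cup\{x_0\}$, span all of $\R^{n+1}$, which contains every lifted point. The right condition is that the lifts of $S$ alone are independent and their span contains no other lifted point. This forces $(1,x_0)\notin\langle S\rangle$. Hence the line $\langle S\rangle^\perp$ meets $P_0=\{z:(1,x_0)\cdot z=1\}$ in one point lying on exactly the hyperplanes indexed by $S$.
\item The claim ``each such $S$ gives a distinct pair of regions'' does not follow from Zaslavsky applied to the central arrangement. There $|\mu|\ge1$ yields only one region per line, about $\binom{2^n}{n}\approx\binom{2^n-1}{n}$ in total. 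That is half the target; the other half sits in $|\mu|$ of the top flat $\{0\}$. You need the deconing identity $T_n=2r(\mathcal D_n)$, as in~\eqref{eq:decone-count}, together with $r(\mathcal D_n)\ge\#\{\text{intersection subspaces of }\mathcal D_n\}$ from Lemma~\ref{lem:arrangement-bounds}(2).
\end{itemize}

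The genuine gap is your plan for the span estimate itself, if you mean to prove it rather than cite Theorem~\ref{thm:span}.

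At $r=n-1$ the hyperplane $w^\perp$ always contains the $2(n-1)$ vertices $\pm v_i$. So ``few $y$ with $\langle w,y\rangle=0$'' must mean exactly $2(n-1)$, i.e.\ $p(w)=2(n-1)2^{-n}$. Concentration or inverse Littlewood--Offord bounds $p(w)\le u$ at scales $u\gg n2^{-n}$ cannot decide this.

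A union bound over $y$ controls $\E\,\#\{\text{extra vertices}\}$, and this is at least of order $n^2$. Two equal or opposite rows of $R$ occur with probability $\asymp n^22^{-n}$, and they make the span the hyperplane $\{x_i=\pm x_j\}$, which contains $2^{n-1}$ vertices.

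Discarding structured normals does not rescue a first-moment argument. By the symmetry $x\mapsto(y_1x_1,\dots,y_nx_n)$, one has $\sum_y\mathbb P\{y\text{ extra}\}=2^n\,\mathbb P\{v_n\text{ extra}\}$ for an independent uniform $v_n$. You would therefore need the square matrix $(R,v_n)$ to be singular, through relations not caused by equal rows or columns, with probability $o(2^{-n})$. Combined with the easy structured cases and Theorem~\ref{thm:independence} for $R$, this would imply~\eqref{eq:square-singularity-conjecture}, which is open.

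The paper avoids this by bounding a probability rather than an expectation.
\begin{itemize}
\item It weights each configuration by $2^n/\#\{\text{extra vertices}\}$ (Proposition~\ref{prop:reduction}).
\item It uses the column symmetry of $M=(R,v_n)$ and Lemma~\ref{lem:large-coefficients} to place a coefficient $|a_n|>e^{hn}$ of the primitive relation in the last slot, at the cost of a factor $2$.
\item It uses $|a_n|\mid\Delta(R)=|\Cok(B)|$ (Lemma~\ref{lem:divisibility} and~\eqref{eq:cokernel-identity}) to reduce to the cokernel of a \emph{rectangular} $(n-2)\times(n-1)$ $0/1$ matrix. Its rank drops modulo $p$ with probability at most about $p^{-2}$.
\item Proposition~\ref{prop:cokernel} then controls this cokernel via Maples's comparison, Nguyen--Wood, and inverse Littlewood--Offord modulo large primes.
\end{itemize}
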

\end{samepage}

The upper bound follows from~\eqref{eq:classical-count-upper}. The lower bound is implied by a geometric problem on the span of several Rademacher random vectors, as observed by several authors~\cite{ZuevGeometry,KKS,Irmatov1996}. We formulate this problem in Section~\ref{sec:intro-span} and derive the lower bound from Theorem~\ref{thm:span} in Section~\ref{sec:arrangements}.

\subsection{Capacity of a threshold neuron}\label{sec:intro-capacity}
As modeled by McCulloch and Pitts~\cite{MP}, a Boolean threshold function is corresponding to a threshold neuron: $a_1,\ldots,a_n$ are the synaptic weights, and the activation status of the neuron is determined by whether $a_1x_1+\cdots+a_nx_n$ is above or below $-a_0$. A basic question is how much information can be represented by varying these parameters, i.e., what is the capacity of a threshold neuron. 

The questions about capacity have been addressed for various important architecture in the development of neural network. For example, the information and storage capacity of Hopfield's network was determined to relative precise accuracy ~\cite{AbuMostafaStJacques, McElieceEtAl}. As the building block of Hopfield's network, the capacity of a threshold neuron,  although is implicit in the many previous counting results \cite{Cameron,Joseph,Winder,Cover,Muroga,YI,Zuev,KKS}, was first formulated in the literature in Baldi-Vershynin~\cite{BVcapacity}: the capacity of a class of functions is the logarithm base two of the number of functions it can implement, and for the single threshold neuron on $\{-1,1\}^n$, its capacity is precisely $\log_2T_n$ bits. The latest best known result before the present work is due to Kahn, Koml\'os, and Szemer\'edi~\cite[Section~4]{KKS}, whose bounds imply
\[
\log_2 T_n
=
n^2-\log_2(n!)+1+O(n).
\]
Our result reduces the $O(n)$ error term to $O(n^{-99})$. 

\begin{samepage}
\begin{corollary}[Capacity of a threshold neuron]\label{thm:capacity}
For every integer $n\ge1$, the capacity $\log_2T_n$ of a threshold neuron on $\{-1,1\}^n$ satisfies
\begin{equation}\label{eq:single-capacity}
\log_2T_n
=
n^2-\log_2(n!)+1+O(n^{-99}),
\end{equation}
where the implicit constant is absolute.
\end{corollary}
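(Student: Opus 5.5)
The corollary should follow from Theorem~\ref{thm:count} by taking logarithms and evaluating $\log_2\binom{2^n-1}{n}$ precisely. By Theorem~\ref{thm:count},
\[
\log_2 T_n = 1+\log_2\binom{2^n-1}{n}+\log_2\bigl(1+O(n^{-99})\bigr).
\]
For $n$ large enough that the $O(n^{-99})$ term is at most $1/2$ in absolute value, the last term is $O(n^{-99})$, since $\log_2(1+x)=O(x)$ for $|x|\le1/2$. The finitely many small $n$ are absorbed into the absolute constant, because both sides of \eqref{eq:single-capacity} are finite there.

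It remains to show that $\log_2\binom{2^n-1}{n}=n^2-\log_2(n!)+O(n^{-99})$. Write
\[
\binom{2^n-1}{n}=\frac{1}{n!}\prod_{j=1}^{n}(2^n-j)=\frac{2^{n^2}}{n!}\prod_{j=1}^{n}\bigl(1-j2^{-n}\bigr).
\]
Then
\[
\log_2\binom{2^n-1}{n}=n^2-\log_2(n!)+\sum_{j=1}^n\log_2(1-j2^{-n}).
\]
Each $j2^{-n}\le n2^{-n}\le1/2$ for $n\ge2$, so the sum is bounded in absolute value by $O\bigl(\sum_{j\le n} j2^{-n}\bigr)=O(n^2 2^{-n})$. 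This is $O(n^{-99})$, indeed exponentially smaller.

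Combining these pieces gives \eqref{eq:single-capacity}. There is no real obstacle: all the difficulty lies in Theorem~\ref{thm:count}. The only point needing care is that the error $1+O(n^{-99})$ stays bounded away from $0$, so its logarithm is controlled; for small $n$ this is handled by enlarging the constant.
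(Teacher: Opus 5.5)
Your proposal is correct and is the paper's own argument: take $\log_2$ of Theorem~\ref{thm:count} and expand $\log_2\binom{2^n-1}{n}=n^2-\log_2(n!)+\sum_{j=1}^n\log_2(1-j2^{-n})$. The sum is $O(n^2 2^{-n})=O(n^{-99})$. Your treatment of small $n$ and of $\log_2(1+O(n^{-99}))$ also makes explicit a step the paper leaves implicit.
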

\end{samepage}

Corollary~\ref{thm:capacity} follows directly from Theorem~\ref{thm:count} by taking binary logarithms, see Section~\ref{sec:capacity-proof}.

\subsection{Spans of random sign vectors}\label{sec:intro-span}
In this section, we consider the following geometric problem: for $v_1,\ldots,v_r$ chosen independently and uniformly from $\{-1,1\}^n$, what is the probability such that the event 
\[
\langle v_1,\ldots,v_r\rangle\cap\{-1,1\}^n
=\{\pm v_1,\ldots,\pm v_r\} \quad \text{holds ?}
\]

Notice that, by Koml\'os's result~\cite{Komlos01}, this probability tends to zero when $r=n$. On the other hand, Odlyzko~\cite{Odlyzko} proved that it tends to one uniformly for $r\le n-10n/\log n$. These results formed the Kalai--Linial--Odlyzko conjecture: the largest $r$ for which the event above holds with probability $1-o(1)$ is $r=n-1$, and the failure probability is asymptotic to
\[
4\binom r3\left(\frac34\right)^n.
\]
Kahn, Koml\'os, and Szemer\'edi~\cite[Corollary~4(b)]{KKS} later extended Odlyzko's bound to $r\le n-C$ for some absolute $C>0$. More recently, Irmatov~\cite{IrmatovSpans} claimed this sharp asymptotic for all $r\le n-1$. However, several issues in the argument remain unresolved, and we do not see how the current proof yields the stated asymptotic. In the next theorem, we show that the transition indeed occurs at $r=n-1$, with the error term weaker than the original conjecture.

\begin{samepage}
\begin{theorem}[Span of random sign vectors]\label{thm:span}
Let $1\le r\le n-1$, and let $v_1,\ldots,v_r$ be independent random vectors uniformly distributed in $\{-1,1\}^n$. For all sufficiently large $n$, with probability at least $1-Cn^{-99}$, these vectors are linearly independent and
\begin{equation}\label{eq:span}
\langle v_1,\ldots,v_r\rangle\cap \{-1,1\}^n
=\{\pm v_1,\ldots,\pm v_r\},
\end{equation}
where $C>0$ is an absolute constant.
\end{theorem}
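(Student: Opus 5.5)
The plan is a union bound over the possible ``extra'' sign vectors in the span, split by how many of the $v_i$ are needed to express them. First, linear independence of $v_1,\ldots,v_r$ for $r\le n-1$ holds with probability $1-O(n^{-99})$ (indeed $1-e^{-cn}$) by the Kahn--Koml\'os--Szemer\'edi / Tao--Vu singularity estimates applied to the $r\times n$ matrix. On this event, suppose $w\in\{-1,1\}^n\setminus\{\pm v_i\}$ lies in the span. Write $w=\sum_{i\in S}c_iv_i$ with all $c_i\neq0$ and $|S|=k\ge2$. Then $w,v_i$ ($i\in S$) are $k+1$ sign vectors forming a minimal dependency, so the $n\times(k+1)$ matrix $M=[v_S\,|\,w]$ has a one-dimensional kernel spanned by a vector with all coordinates nonzero. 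The bad event is therefore contained in the union over $S$ of the events
\[
E_S=\{\exists\, w\in\{-1,1\}^n\setminus\{\pm v_i\}:\ w\in\langle v_i:i\in S\rangle \text{ with full support}\}.
\]

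\textbf{Small $k$.} For $k\le K$ (a large constant, or even $k\le \epsilon n$), I would count directly. For each row $j$, the vector $(v_{ij})_{i\in S}$ is uniform on $\{-1,1\}^k$, and $w_j=\sum c_iv_{ij}$ must lie in $\{\pm1\}$. A fixed coefficient vector $c$ (full support, not a signed unit vector) admits at most $2^{k}-1$ good patterns out of $2^{k+1}$ in each row. Hence $\mathbb P(\text{every row good})\le(1-2^{-k})^n$ roughly. The difficulty is that $c$ is not fixed. The standard fix is that $c$ is determined by $k$ rows of $M$ that are in general position, so there are at most $\binom nk 2^{k}$ choices. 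Together with a Littlewood--Offord/Hal\'asz bound, this gives a bound like $n^k(3/4)^{n-k}$ summed over $\binom rk$ subsets. That is exponentially small for bounded $k$ (the $k=2$ term reproduces $4\binom r3(3/4)^n$), and for $k$ up to $\epsilon n/\log n$ it remains small. For $k$ up to $\epsilon n$, I would refine by using the structure of $c$. If $c$ has many distinct or non-lattice values, then anticoncentration $\sup_t\mathbb P(\langle c,x\rangle=t)\le Ck^{-1/2}$ gives a per-row probability of $Ck^{-1/2}$ for $w_j\in\{\pm1\}$, which easily beats the $\binom{r}{k}$ entropy. If instead $c$ is structured, we count such $c$ via inverse Littlewood--Offord.

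\textbf{Large $k$ (up to $r\le n-1$).} This is the main obstacle, particularly $k$ close to $n$, where the dependency $w=\sum c_iv_i$ is essentially a statement that the $n\times(k+1)$ matrix with last column $w$ is rank-deficient. I would condition on $w$ and treat $M$ as a random $n\times(k+1)$ sign matrix with one column replaced by a fixed sign vector. Then I would apply the Tao--Vu/Rudelson--Vershynin inverse Littlewood--Offord machinery, i.e.\ the normal-vector approach: the kernel vector $(c,-1)$ must have least common denominator at least $e^{cn}$ unless it is very structured. Compressible and structured kernel vectors are handled by net arguments giving probability $e^{-cn}$ per fixed $w$. This is too weak after a union bound over $2^n$ choices of $w$. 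So instead of fixing $w$, I would fix the row space. Conditioned on $V=\langle v_S\rangle$, the number of sign vectors in $V$ is at most $2^{\dim V}$ times the maximal atom of a random walk along a normal direction of $V$. Since $V^\perp$ has dimension $n-k\ge1$, a random normal vector $u$ to $V$ is unstructured with probability $1-e^{-cn}$ (by the no-gaps-delocalization/LCD results for a random matrix with $n-k$ deficiency). For unstructured $u$, $|\{x\in\{\pm1\}^n:\langle u,x\rangle=0\}|\le 2^n\cdot O(n^{-1/2-\delta})$, and iterating over $n-k$ independent normal directions yields the count. The delicate case $k=n-1$ will be where the $n^{-99}$ loss enters: the only normal direction is a single random vector, so one needs $\mathbb P(\exists\text{ extra } w)$ bounded by a polynomial-type anticoncentration estimate of $u$ with exponent pushed to $99$ by high-order Hal\'asz inequalities. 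I expect the bulk of the work to be showing that the normal vector $u$ to $n-1$ random sign vectors has, with probability $1-O(n^{-99})$, $\#\{x:\langle u,x\rangle=0\}\le (n-1)+$ trivial solutions, by ruling out arithmetic structure of $u$ at all scales.
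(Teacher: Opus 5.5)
\textbf{There is a genuine gap: your plan has no working argument for $r=n-1$, which is the whole content of the theorem.}

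Your small-support counting is the classical Odlyzko/KKS argument, and it is fine as far as it goes. One slip: the $(3/4)^n$ term comes from $|S|=3$ (e.g.\ $w=v_i+v_j-v_k$), since two independent sign vectors span no extra vertices. But small support is not where the difficulty lies. KKS already reached $r\le n-C$; what remains is $r=n-1$ with relations of support close to $n$.

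Let $u$ be the normal to $\langle v_1,\ldots,v_{n-1}\rangle$ and $p(u)=\mathbb P\{\epsilon^{\tran}u=0\}$. The theorem at $r=n-1$ is \emph{exactly} the statement $2^np(u)=2(n-1)$ with probability $1-O(n^{-99})$. Your final sentence restates this rather than proving it.

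No structural property of $u$ can certify it. Large LCD, no-gaps delocalization, absence of inverse Littlewood--Offord structure, and high-order Hal\'asz bounds all control $p(u)$ only up to polynomial or exponential factors (at best $p(u)\le e^{-cn}$). A single extra pair $\pm w$, however, moves $2^np(u)$ only from $2(n-1)$ to $2n$. So an upper bound on $p(u)$ certifies exactness only if it lies below $2n\cdot2^{-n}$, the scale of individual vertices, which none of these tools reaches. A $u$ with a few extra orthogonal vertices is indistinguishable from a good one by any such statistic.

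The randomness cannot be exploited by the natural first-moment count either. With probability $\asymp n^22^{-n}$, two rows of $R=(v_1,\ldots,v_{n-1})$ agree up to sign. Then $\langle R\rangle=\{x:x_i=\pm x_j\}$ contains $2^{n-1}$ vertices, so the expected number of extra vertices is $\gtrsim n^2$. Equivalently, $(R,v_n)$ is singular through a full-support column relation with probability at least of order $n^22^{-n}$, not $o(2^{-n})$.

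Any successful argument must therefore do two things. It must charge each bad $R$ only once, however many extra vertices it has. And it must avoid needing anticoncentration at scale $2^{-n}$. Your "$n^{-99}$ from high-order Hal\'asz'' step has no mechanism behind it.

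\textbf{The paper supplies exactly these two missing ingredients.}
\begin{enumerate}
\item It weights $M=(R,v_n)$ by $2^n$ divided by the number of extra vertices in its span. This gives $\E(w(M)\mid R)=1$ on the bad event.
\item A KKS-type concentration argument shows that, with probability $1-Ce^{-cn}$, every primitive integer relation $\sum_{i<n}a_iv_i+a_nv=0$ with $v$ extra has more than $n/2$ coordinates exceeding $e^{hn}$.
\item Column exchangeability then reduces matters to the single event $|a_n|>e^{hn}$.
\item Cramer's rule and B\'ezout give $|a_n|\mid\Delta(R)$, the gcd of the maximal minors divided by $2^{n-2}$.
\item Sign normalization and row/column elimination give $\Delta(R)=|\Cok(B)|$ for an $(n-2)\times(n-1)$ random $0/1$ matrix $B$.
\end{enumerate}
Hence $\mathbb P(F_n)\le2\mathbb P\{|\Cok(B)|>e^{hn}\}+Ce^{-cn}$.

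The $n^{-99}$ loss is then purely arithmetic:
\begin{itemize}
\item For primes up to $N^K$, moment bounds on the small-prime part of the cokernel, via Maples's comparison theorem and $p$-adic Haar computations.
\item For primes in $(N^K,e^{\delta N}]$, the Nguyen--Wood rank bounds.
\item For primes above $e^{\delta N}$, inverse Littlewood--Offord transferred to a fixed auxiliary prime.
\end{itemize}
The idea your proposal is missing is this replacement of an exact vertex count at scale $2^{-n}$ by a size bound on one integer invariant of $R$.
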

\end{samepage}

\begin{remark}
The exponent $99$ here and in all the other related results can be replaced by any fixed $A>0$, as can be seen from the proof of Theorem~\ref{thm:span} in Section~\ref{sec:span-proof}, with the implicit constant depending on~$A$.
\end{remark}

This is a main technical difficulty of the present paper, and the proof has two main steps. The first step, Proposition~\ref{prop:reduction}, reduces the problem to an integral cokernel estimate. A concentration argument first shows that the relevant integer relations have many exponentially large coefficients. We then assign weights inversely proportional to the number of additional vertices of the cube, and use column symmetry to reduce the problem to an estimate of the last coefficient $a_n$. Since this coefficient divides the normalized greatest common divisor of the maximal minors, it remains to bound the probability that the cokernel of an $(n-2)\times(n-1)$ matrix with independent uniform $0/1$ entries has large order. The second step estimates this cokernel through bounds for higher moments, using Maples's comparison theorem~\cite[Theorem~1.2]{Maples}, Nguyen--Wood's rank estimate modulo a prime~\cite[Sections~6--7]{NW}, and inverse Littlewood--Offord theory. Together, these ingredients give the error term in Theorem~\ref{thm:span}. For a weaker $n^{-1+\varepsilon}$ error bound, the proof can be simplified by replacing the estimate of higher moments with the bound for the first moment together with the estimates already established in~\cite[Propositions~2.2--2.3]{NW}.

\subsection{Memory retrieval}\label{sec:intro-memory}
In a pioneering work, Hopfield~\cite{Hopfield}, based on the threshold model introduced by McCulloch and Pitts~\cite{MP}, described an algorithm for storing memories, i.e., points in the Boolean hypercube, in the corresponding neural network, with the feature that a stored memory can be retrieved from different input patterns similar to it. Personnaz, Guyon, and Dreyfus~\cite{PGD} subsequently proposed, for the same neural network, a different algorithm for storing and retrieving information, called the \emph{projection rule}. For this projection rule, Kanter and Sompolinsky~\cite{KS} proposed a Hamiltonian to interpret its mechanism for memory retrieval. In this setting, the memories to be stored and retrieved are linearly independent vectors $v_1,\ldots,v_r\in\{-1,1\}^n$. The projection rule works as follows. Suppose these memories have been stored, which essentially means that one forms
\begin{equation}\label{eq:projection}
V=(v_1,\ldots,v_r),\qquad
P_V=V(V^{\tran}V)^{-1}V^{\tran}.
\end{equation}
Given an input $x\in\{-1,1\}^n$ similar to a stored memory $v_i$, one computes $\sgn(P_Vx)$ and compares it with $v_i$. If $\sgn(P_Vx)=v_i$ (or $v_i$ is reached after finitely many iterations), then the memory $v_i$ is retrieved successfully; otherwise it is not. For more background on the development of Hopfield networks and the projection rule, see~\cite{YampolskayaMehta}.

The Hamiltonian proposed by Kanter and Sompolinsky to analyze this rule, in an equivalent form, is
\begin{equation}\label{eq:energy-intro}
H_{\mathrm{KS}}(x)=-\frac12x^{\tran}P_Vx
=-\frac n2+\frac12\|(I-P_V)x\|_2^2,
\qquad x\in\{-1,1\}^n.
\end{equation}
One can easily see that the minimizers of $H_{\mathrm{KS}}$ are exactly the vertices from $\{-1,1\}^n$ in $\langle v_1,\ldots,v_r\rangle$. These minimizers are called \emph{ground states}, and in the thermodynamic framework of Kanter and Sompolinsky they have a distinguished role in the analysis of the memory retrieval mechanism. This leads to the question of how many memories can be stored without creating ground states other than $\{\pm v_1,\ldots,\pm v_r\}$, i.e., the stored memories and their negatives. For most collections of $r$ memories, Odlyzko's result~\cite{Odlyzko} shows that the ground states are exactly $\{\pm v_1,\ldots,\pm v_r\}$ whenever $r\le n-10n/\log n$, and Kahn, Koml\'os, and Szemer\'edi~\cite[Corollary~4(b)]{KKS} improved this range to $r\le n-C$ for some absolute constant $C>0$. Theorem~\ref{thm:span} therefore settles the sharp transition point at $r=n-1$: $n-1$ is the largest possible number of memories such that, with probability $1-o(1)$, the ground states are exactly $\{\pm v_1,\ldots,\pm v_r\}$:

\begin{samepage}
\begin{corollary}[Threshold for number of ground states]\label{cor:memory}
Let $1\le r\le n-1$, and choose $v_1,\ldots,v_r$ independently and uniformly from $\{-1,1\}^n$. For all sufficiently large $n$, with probability at least $1-Cn^{-99}$, the patterns are linearly independent and
\[
\operatorname*{arg\,min}_{x\in \{-1,1\}^n}H_{\mathrm{KS}}(x)
=\{\pm v_1,\ldots,\pm v_r\}.
\]
\end{corollary}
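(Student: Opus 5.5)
This follows directly from Theorem~\ref{thm:span}; the only work is to identify the minimizers of $H_{\mathrm{KS}}$ on the cube with the cube vertices lying in $\langle v_1,\ldots,v_r\rangle$. We work on the event $\mathcal E$ of Theorem~\ref{thm:span}, on which $v_1,\ldots,v_r$ are linearly independent and \eqref{eq:span} holds; by that theorem, $\mathbb P(\mathcal E)\ge 1-Cn^{-99}$ for all sufficiently large $n$. On $\mathcal E$ the matrix $V=(v_1,\ldots,v_r)$ has full column rank $r$, so $V^{\tran}V$ is invertible. Hence $P_V$ in \eqref{eq:projection} is well defined, and it is the orthogonal projection onto $W=\langle v_1,\ldots,v_r\rangle$.

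Next we verify the identity in \eqref{eq:energy-intro}. Since $P_V$ is symmetric and idempotent, $x^{\tran}P_Vx=\|P_Vx\|_2^2$. For $x\in\{-1,1\}^n$ we have $\|x\|_2^2=n$, and by Pythagoras $\|P_Vx\|_2^2=n-\|(I-P_V)x\|_2^2$. This gives
\[
H_{\mathrm{KS}}(x)=-\frac n2+\frac12\|(I-P_V)x\|_2^2\ge -\frac n2,
\]
with equality exactly when $(I-P_V)x=0$, that is, when $x\in W$.

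The minimum value $-n/2$ is attained on the cube, for instance at $x=v_1\in W$. Therefore
\[
\operatorname*{arg\,min}_{x\in\{-1,1\}^n}H_{\mathrm{KS}}(x)=W\cap\{-1,1\}^n .
\]
On $\mathcal E$, \eqref{eq:span} says this set equals $\{\pm v_1,\ldots,\pm v_r\}$, which proves the corollary with the same constant $C$ as in Theorem~\ref{thm:span}.

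There is no real obstacle here. The only point needing care is that linear independence is part of the conclusion of Theorem~\ref{thm:span}, so $P_V$ is defined on the whole good event. All of the difficulty lies in Theorem~\ref{thm:span} itself, which we are assuming.
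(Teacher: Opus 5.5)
Your proof is correct and follows the paper's route. The paper notes that the minimizers of $H_{\mathrm{KS}}$ on the cube are exactly the cube vertices in $\langle v_1,\ldots,v_r\rangle$ and then invokes Theorem~\ref{thm:span}, which is what you do; you additionally spell out the identity \eqref{eq:energy-intro}, the attainment of the minimum at $v_1$, and the well-definedness of $P_V$ on the good event, which the paper leaves implicit.
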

\end{samepage}

\subsection{Specification numbers}\label{sec:intro-spec}
For a Boolean threshold function $f$ on $\{-1,1\}^n$, let $\sigma_n(f)$ be the smallest size of a set of labelled examples $(x,f(x))$ that distinguishes $f$ from every other Boolean threshold function on $\{-1,1\}^n$. Define
\begin{equation}\label{eq:spec-definition}
\sigma_n=\frac1{T_n}\sum_f\sigma_n(f),
\end{equation}
where the sum is over all Boolean threshold functions on $\{-1,1\}^n$.

Martin Anthony~\cite[Theorem~6.1]{Anthony} proved $n+1\le\sigma_n\le2n$. In Problem~6.2 of that paper, he asked whether $\sigma_n/(n+1)$ converges and, if so, what its limit is. We settle this problem by proving that the limit is $2$.

\begin{samepage}
\begin{theorem}[Average specification number]\label{thm:spec}
For all sufficiently large $n$,
\begin{equation}\label{eq:spec}
2n-Cn^{-98}\le\sigma_n\le2n.
\end{equation}
In particular, $\displaystyle\lim_{n\to\infty}\sigma_n/(n+1)=2$.
\end{theorem}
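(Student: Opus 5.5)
The plan is to compute $\sum_f\sigma_n(f)$ exactly in terms of the hyperplane arrangement that defines threshold functions, and then apply Theorem~\ref{thm:count} together with a codimension-one version of its lower-bound argument. The upper bound $\sigma_n\le 2n$ is Anthony's~\cite[Theorem~6.1]{Anthony}, so only the lower bound needs proof.

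\textbf{Step 1: reduce to counting facets.} Identify each threshold function $f$ with a cell $C_f$ of the central arrangement $\mathcal A=\{H_x: x\in\{-1,1\}^n\}$ in $\R^{n+1}$, where $H_x=\{(a_0,a): a_0+\langle a,x\rangle=0\}$. The $2^n$ hyperplanes are distinct, since the vectors $(1,x)$ are pairwise non-parallel. I will use the standard fact (Anthony--Brightwell--Shawe-Taylor, also in~\cite{Anthony}) that $\sigma_n(f)$ equals the number of \emph{essential points} of $f$. A point $x$ is essential if flipping $f(x)$ yields another threshold function. Geometrically, $x$ is essential exactly when $H_x$ supports a facet ($n$-dimensional face) of $C_f$, so $\sigma_n(f)$ is the number of facets of $C_f$. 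Each $n$-face of $\mathcal A$ borders exactly two cells. Hence
\[
\sum_f\sigma_n(f)=2F_n=2\sum_x R(x),
\]
where $F_n$ is the number of $n$-faces and $R(x)$ is the number of regions of the restricted arrangement $\mathcal A^{H_x}=\{H_y\cap H_x: y\ne x\}$ inside $H_x\cong\R^n$.

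\textbf{Step 2: use symmetry.} The hyperoctahedral symmetries of the cube act on $(a_0,a)$ linearly and permute the hyperplanes transitively. Therefore $R(x)=R$ is independent of $x$, and
\[
\sigma_n=\frac{2^{n+1}R}{T_n}.
\]
By Theorem~\ref{thm:count}, $T_n\le 2\binom{2^n-1}{n}(1+Cn^{-99})$, so it suffices to show
\[
R\ge 2\binom{2^n-2}{n-1}\bigl(1-Cn^{-99}\bigr).
\]
Indeed, this gives
\[
\sigma_n\ge 2^{n+1}\cdot\frac{n}{2^n-1}\,\bigl(1-C'n^{-99}\bigr)\ge 2n-Cn^{-98}.
\]

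\textbf{Step 3: lower-bound $R$ (the main obstacle).} Here I would repeat the argument that Section~\ref{sec:arrangements} uses for the lower bound on $T_n$, but one dimension lower. The regions of the central arrangement $\mathcal A^{H_x}$ in $H_x$ are at least twice the number of its one-dimensional flats that are cut out by exactly $n-1$ of its hyperplanes, i.e.\ simple rays. Each of these flats comes from choosing $n-1$ points $y_1,\dots,y_{n-1}\ne x$ and intersecting $H_x\cap H_{y_1}\cap\dots\cap H_{y_{n-1}}$. That intersection is a line exactly when $(1,x),(1,y_1),\dots,(1,y_{n-1})$ are linearly independent. The line lies on no other $H_z$ exactly when the span of these $n$ vectors meets $\{(1,z)\}$ only in the chosen points. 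This condition is implied by the span condition~\eqref{eq:span}: working in dimension $n+1$ with sign vectors $(\pm1,z)$, apply it with $r=n\le (n+1)-1$.

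Theorem~\ref{thm:span} is stated for $n$ uniformly random vectors, whereas here one vector, $(1,x)$, is fixed. I would handle this using the sign-flip symmetries together with the invariance of~\eqref{eq:span} under negating individual vectors. These show that conditioning on $v_1=(1,x)$ and on the first coordinates of $v_2,\dots,v_n$ being $+1$ does not change the probability. The failure probability is therefore still $O(n^{-99})$, and distinct choices of the $(n-1)$-set give distinct rays, since the line determines the set. I would then count the ordered $(n-1)$-tuples of distinct points among the $2^n-1$ points $y\ne x$ that satisfy the condition, and convert ordered tuples to sets by dividing by $(n-1)!$. Tuples with repeated points form an $O(n^2 2^{-n})$ fraction, which is negligible. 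This yields the required bound on $R$.

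The delicate points I expect are the exact form of the region-versus-simple-ray inequality in the central arrangement, and making sure the conditioning on $v_1$ in Theorem~\ref{thm:span} is legitimate. If the symmetry argument does not cover the conditioning directly, I would instead rerun Proposition~\ref{prop:reduction} with one fixed row, since the cokernel estimates there should be insensitive to a single deterministic row.
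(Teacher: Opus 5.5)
Your argument is correct, but it organizes the lower bound differently from the paper. The paper never fixes a hyperplane. It decones once along $e_0^\perp$, so that $r(\mathcal A_n^{e^\perp})=2r(\mathcal D_n^{P_0\cap e^\perp})$ for every $e\ne e_0$. Each good $n$-set $S$ of Lemma~\ref{lem:intersections} then gives a distinct point of $\mathcal D_n^{P_0\cap e^\perp}$ for each of its $n$ elements $e$. Summing over $e$ yields $\sum_{e\ne e_0}r(\mathcal A_n^{e^\perp})\ge2n\binom{2^n-1}{n}(1-Cn^{-99})$ at once. Since $\sigma_n$ depends only on this sum, the paper needs no symmetry. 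It uses Theorem~\ref{thm:span} only in the unconditioned form already packaged in Lemma~\ref{lem:intersections}, and it needs only the affine bound of Lemma~\ref{lem:arrangement-bounds}(2).

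Your route uses hyperoctahedral transitivity to reduce to one restricted arrangement. This gives a pointwise bound on each $r(\mathcal A^{H_x})$, but it costs the two points you flagged. Both of them go through:
\begin{itemize}
\item The conditioning on $v_1=(1,x)$ is legitimate. Simultaneous coordinate sign changes preserve the law and the event in~\eqref{eq:span} and act transitively on the value of $v_1$. Negating individual vectors then normalizes the first coordinates of the others.
\item The inequality $r\ge2\#\{\text{lines}\}$ holds for every central arrangement. By Zaslavsky, $r=\sum_X|\mu(X)|$. Since $\chi(1)=0$, the even-rank and odd-rank parts of this sum are equal, and each flat contributes at least one. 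Alternatively, you can decone along one hyperplane $H_x\cap H_{y_0}$ of $\mathcal A^{H_x}$ as the paper does. This loses only the sets containing $y_0$, a fraction $(n-1)/(2^n-1)$.
\end{itemize}
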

\end{samepage}

Anthony~\cite[Theorem~3.3]{Anthony} identifies $\sigma_n(f)$ with the number of facets of the corresponding chamber closure. Averaging over all chambers and applying the chamber--facet incidence formula~\cite[Lemma~5.1]{Anthony} reduces the problem to estimating the numbers of regions of the restricted arrangements. Theorem~\ref{thm:spec} then follows from the estimate~\eqref{eq:spec-restricted-lower} and the full proof is given in Section~\ref{sec:spec-proof}.

\subsection{The linear dependence of Rademacher random vectors}\label{sec:intro-dependence}
In this section, we study the following problem: for $v_1,\ldots,v_r$ chosen independently and uniformly from $\{-1,1\}^n$, what is the probability that these random vectors are linearly dependent? When $r=n$, this is equivalent to determining the probability that the corresponding $n\times n$ Rademacher random matrix is singular. Koml\'os first proved that this probability tends to zero~\cite{Komlos01} and later obtained the bound $O(n^{-1/2})$~\cite[Section~1.1]{KKS}. Since then, this problem has been extensively investigated and the upper bounds have continued to improve~\cite{KKS,TaoVu2006,TaoVu2007,BourgainVuWood,Tikhomirov}, and the current best result is due to Tikhomirov~\cite{Tikhomirov}, who proved
\[
\mathbb P\{M_n\text{ is singular}\}=(1/2+o(1))^n,
\]
which captures the probability at the correct exponential scale, whereas the conjectured finer asymptotic
\begin{equation}\label{eq:square-singularity-conjecture}
\mathbb P\{M_n\text{ is singular}\}
=(1+o(1))\,\frac{n^2}{2^{n-1}}
\end{equation}
still remains open.

However, if we consider $r\le n-C$, where $C>0$ is an absolute constant, an asymptotic formula with this level of precision was already obtained by Kahn, Koml\'os, and Szemer\'edi. More precisely, Kahn, Koml\'os, and Szemer\'edi~\cite[Corollary~4(a)]{KKS} proved that, for independent uniform vectors $v_1,\ldots,v_r\in\{-1,1\}^n$ with $2\le r\le n-C$,
\begin{equation}\label{eq:kks-dependence}
\mathbb P\{v_1,\ldots,v_r\text{ are linearly dependent}\}
=(1+o(1))\,2\binom r2\,2^{-n}.
\end{equation}
They conjectured that~\eqref{eq:kks-dependence} remains valid all the way to the endpoint $r=n-1$. The next theorem settles this conjecture and gives an exponentially smaller error term.

\begin{samepage}
\begin{theorem}[Linear dependence]\label{thm:independence}
Let $1\le r\le n-1$, and let $v_1,\ldots,v_r$ be independent uniform random vectors in $\{-1,1\}^n$. Then
\begin{equation}\label{eq:independence}
\mathbb P\{v_1,\ldots,v_r\text{ are linearly dependent}\}
=2\binom r2\,2^{-n}+O(2^{-n}e^{-cn}),
\end{equation}
where $c>0$ and the implied constant are absolute.
\end{theorem}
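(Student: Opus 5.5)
We prove Theorem~\ref{thm:independence} by splitting the dependence event according to the shape of the shortest linear relation among $v_1,\ldots,v_r$. Call a relation $\sum_{i\in S} a_iv_i=0$ with all $a_i\neq0$ and $|S|=k$ minimal, a $k$-relation. The main term comes from $k=2$. The event that $v_i=\pm v_j$ for some pair $i<j$ has probability
\[
\sum_{i<j}2\cdot 2^{-n}-O\Bigl(r^4 4^{-n}\Bigr)=2\binom r2 2^{-n}+O(n^4 4^{-n})
\]
by inclusion--exclusion, where the second-order terms count pairs of pairs. It therefore suffices to show that
\[
\mathbb P\{v_1,\ldots,v_r\text{ dependent, but no two are }\pm\text{-parallel}\}=O(2^{-n}e^{-cn})
\]
uniformly for $r\le n-1$. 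Since the dependence probability is monotone in $r$, and since the correction from the main term is handled exactly, we may take $r=n-1$ in the bound on this remaining event.

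\textbf{Step 1 (small supports).} For $3\le k\le \delta n$, we bound the probability of a $k$-relation with no proportional pair. This is a coordinatewise computation. Fix $S$ and the coefficient vector $a$ up to scaling. Each coordinate of $\sum a_iv_i$ vanishes with probability $p(a)=\mathbb P\{\sum_{i\in S}a_i\epsilon_i=0\}$, independently across the $n$ coordinates. For $k\ge3$ with no $\pm$-parallel pair, Littlewood--Offord-type bounds give $p(a)\le 3/8$; the extremal case is $a=(1,1,1,1)$ for $k=4$, while $k=3$ gives at most $1/4$. The number of relevant coefficient classes is controlled: a minimal relation is determined by a $(k-1)\times k$ sign submatrix, so there are at most $2^{k^2}$ of them. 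This yields a total of
\[
\sum_{k\le\delta n}\binom rk 2^{k^2}(3/8)^n=O(2^{-n}e^{-cn})
\]
for small $\delta$.

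\textbf{Step 2 (large supports).} For $k>\delta n$, the relation is spread out. Here we follow the Kahn--Koml\'os--Szemer\'edi/Tikhomirov-type strategy used earlier in the paper for Theorem~\ref{thm:span}. We expose $v_1,\ldots,v_{r-1}$ and ask whether $v_r$ lies in their span $W$. Let $W^\perp$ have dimension $n-r+1\ge 2$, and pick a unit normal $u\in W^\perp$ that is not compressible, using the standard fact that with probability $1-O(e^{-cn}2^{-n})$ no compressible vector is orthogonal to all columns; this again reduces to Step 1-type counting. Then
\[
\mathbb P\{v_r\in W\}\le \mathbb P\{\langle u,v_r\rangle=0\}\le \rho(u),
\]
and we need the small-ball probability $\rho(u)\le (1/2-c)^n$ outside an event of probability $O(2^{-n}e^{-cn})$.

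This is exactly Tikhomirov's theorem: for an $n\times n$ Bernoulli matrix, incompressible null directions with $\rho\ge(1/2-c)^n$ occur with probability at most $(1/2+o(1))^n$. We need a stronger bound here, namely $2^{-n}e^{-cn}$. The extra room comes from the fact that $r\le n-1$, so the matrix has a nontrivial codimension. Having two independent normal directions lets us replace $\langle u,v_r\rangle=0$ by the joint condition $\langle u,v_r\rangle=\langle u',v_r\rangle=0$. Equivalently, we apply Tikhomirov's inverse Littlewood--Offord estimate with the rectangular gain of an $(n-1)\times n$ matrix, for which the count of structured normals is smaller by an extra exponential factor.

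\textbf{Main obstacle.} I expect the main obstacle to be this endpoint $r=n-1$ in Step 2, where only one spare dimension is available. If Tikhomirov's rectangular bound does not directly give $2^{-n}e^{-cn}$, I would fall back on a union over which vector $v_j$ is the last one in the relation. This costs only a factor $n$, and for each $j$ it gives the bound
\[
\mathbb P\{v_j\in\langle v_i:i\ne j\rangle,\ \text{relation spread}\}\le \mathbb E\,\rho(u_j).
\]
We would then estimate $\mathbb E\,\rho(u_j)$ via the moment/cokernel bounds developed for Theorem~\ref{thm:span}, using Maples's comparison and the Nguyen--Wood rank estimates modulo primes. These show that the probability that a random $(n-1)\times n$ sign matrix has corank $\ge1$ modulo every small prime while having no short relation decays like $(1/2-c)^n$.
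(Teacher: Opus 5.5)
\textbf{There is a genuine gap in Step~2, which is where the whole difficulty of the theorem lies.}

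You bound $\mathbb P\{v_r\in W\}$ by $\rho(u)$ for a normal $u\in W^\perp$, and you ask for $\rho(u)\le(1/2-c)^n$ outside a rare event. This fails for every realization. Since $u\perp v_i$ for all $i<r$, all the points $\pm v_1,\ldots,\pm v_{r-1}$ satisfy $\langle u,\epsilon\rangle=0$. Hence $\rho(u)\ge 2(r-1)2^{-n}$ whenever the $v_i$ are distinct up to sign, and this already exceeds $(1/2-c)^n$. The same lower bound holds for your joint condition with two normals, which (when $\dim W^\perp=2$) is just $v_r\in W$.

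So $\mathbb P\{v_r\in W\mid v_1,\ldots,v_{r-1}\}$ always contains the main term $v_r=\pm v_i$, and nothing in Step~2 removes it. Your hypothesis that the relation is ``spread'' concerns the coefficient vector $a\in\R^r$. Your estimate only sees a normal $u\in\R^n$ to the column span, whose structure has nothing to do with $\supp(a)$. Once the trivial points are removed, what remains is $2^{-n}\E\bigl[|W\cap\{-1,1\}^n|-2(r-1)\bigr]$. Bounding this by $O(2^{-n}e^{-cn})$ is exactly the nontrivial content of the theorem, and Step~2 gives no tool for it.

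Several supporting claims also fail at the required precision:
\begin{enumerate}
\item Two equal rows of the $n\times(n-2)$ matrix $(v_1,\ldots,v_{n-2})$ put the compressible vector $e_s-e_t$ into $W^\perp$. This happens with probability of order $n^22^{-n}$, not $O(2^{-n}e^{-cn})$.
\item Tikhomirov's bound $(1/2+o(1))^n$ is too weak, and the ``rectangular gain'' is asserted without an argument.
\item The fallback through Maples's comparison and the Nguyen--Wood estimates gives only polynomially small errors; this is why Theorem~\ref{thm:span} carries $n^{-99}$. Moreover, $\E\rho(u_j)\ge 2(r-1)2^{-n}$ for the same reason as above.
\end{enumerate}

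\textbf{The missing idea is to stay in the row picture you used in Step~1, and to split by concentration rather than by support.} Treat $a\in\R^r$ as a null vector of the $n\times r$ matrix, whose $n=r+1$ rows are independent uniform vectors in $\{-1,1\}^r$.
\begin{enumerate}
\item If $p(a)\le(2/3)^r$, choose $a$ using $r-1$ rows. The two spare rows give $\binom n2(4/9)^r$, as in Lemma~\ref{lem:subspace-concentration}, and $4/9<1/2$ supplies the exponential gain.
\item If $p(a)>(2/3)^r$ and $\supp(a)>2$, drop the last row and work with the square $r\times r$ matrix, as in Lemma~\ref{lem:large-concentration}.
\begin{enumerate}
\item Corank at least two costs $(1/2+o(1))^{2r}$.
\item Almost-constant null vectors are controlled by the Jain--Sah--Sawhney theorem. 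Its main term is exactly the pair probability, and it is subtracted off.
\item The remaining structured null vectors are handled by their conditional estimate for balanced last rows. Column-sign symmetry removes the conditioning.
\end{enumerate}
\end{enumerate}

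\textbf{Two smaller repairs are also needed.}
\begin{enumerate}
\item In Step~1, the union bound over $2^{k^2}$ coefficient vectors breaks down once $k\gg\sqrt n$, since $2^{\delta^2n^2}$ overwhelms $(3/8)^n$. Instead, select $a$ from $k-1$ rows and use the independence of the other $n-k+1$ rows. This gives $\binom rk\binom n{k-1}(3/8)^{n-k+1}$, which is fine for small $\delta$.
\item The reduction to $r=n-1$ should use the monotone event ``some relation has support larger than two''. The event ``dependent with no parallel pair'' is not monotone in $r$.
\end{enumerate}
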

\end{samepage}

A main difficulty in the proof is to show that dependence relations involving more than two vectors have probability $O(2^{-n}e^{-cn})$ at the endpoint $r=n-1$. We divide these relations according to their concentration. Relations with small concentration are controlled directly by the subspace concentration bound of Kahn, Koml\'os, and Szemer\'edi~\cite[Lemma~2]{KKS}. For large concentration, the corank estimate of Jain, Sah, and Sawhney~\cite{JSSrank} and their analysis of almost-constant null vectors~\cite{JSSsing} reduce the problem to estimating the probability that the last row annihilates a null vector determined by the preceding rows, conditional on the last row lying in a nearly balanced set of sign vectors. We adapt their argument using symmetry under changes of column signs to show that this conditioning does not change the probability of the relevant event, so their conditional concentration estimate yields the required exponentially smaller bound for nontrivial relations. The proof is provided in Section ~\ref{sec:dependence}.

\begin{remark}
 Notice that if an asymptotic formula with the same level of precision could be obtained for $r=n$, it would settle the conjectured bound for the probability that a random Rademacher matrix is singular ~\eqref{eq:square-singularity-conjecture}.   
\end{remark}

\subsection{Capabilities of frontier reasoning LLMs}
It is striking that neural network models have advanced to the point where frontier large language models can themselves help resolve theoretical questions concerning the very basic objects from which the field began.
Here we record two such instances, the first one concerning Theorem~\ref{thm:count}, although the proof yields only a non-explicit o(1) error term, and the second one concerning Theorem~\ref{thm:independence}.

\medskip
\noindent\textit{A weaker form of Theorem~\ref{thm:count}.}
\par\noindent
\url{https://chatgpt.com/share/6ab4fc10-80e0-83ea-a8fd-b3165dd6d57d}

\medskip
\noindent\textit{A proof of Theorem~\ref{thm:independence}.}
\par\noindent
\url{https://chatgpt.com/share/6ab4fed2-e0c4-83ea-986a-50c61710df9d}

\section*{Acknowledgments}
The author is very grateful to Paata Ivanisvili for introducing him to the problem of counting Boolean threshold functions, and to Paata Ivanisvili and Roman Vershynin for helpful discussions on related questions and techniques. He is also very grateful to Haonan Zhang for asking about the importance of Boolean threshold functions and for helpful discussions.

\section{Proofs}\label{sec:proofs}

\subsection{Proof of Theorem~\ref{thm:count}}\label{sec:arrangements}
For a finite set $S\subset\R^d\setminus\{0\}$, two parameter vectors whose defining linear forms do not vanish on $S$ give the same sign function exactly when they lie on the same side of every hyperplane $x^\perp$, $x\in S$, and hence in the same region of the arrangement. This gives the following correspondence, as recorded in~\cite[Section~2]{BVthreshold} and~\cite[Section~3.2]{VershyninLowFrequencies}.

\begin{lemma}[Threshold functions and regions; {\cite[Lemma~2.1]{BVthreshold}}]\label{lem:threshold-regions}
For a finite set $S\subset\R^d\setminus\{0\}$, the number of homogeneous linear threshold functions on $S$ equals the number of regions of the arrangement $\{x^\perp:x\in S\}$.
\end{lemma}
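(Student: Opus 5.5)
The plan is to prove the lemma by exhibiting an explicit bijection between homogeneous linear threshold functions on $S$ and regions of the arrangement $\mathcal A=\{x^\perp:x\in S\}$. A homogeneous linear threshold function on $S$ is a map $f:S\to\{-1,1\}$ of the form $f(x)=\sgn\langle a,x\rangle$ for some $a\in\R^d$ with $\langle a,x\rangle\neq0$ for every $x\in S$. If the definition allows ties, where $\sgn 0$ takes a fixed value, we first perturb $a$; this step is described in the last paragraph. The complement $\R^d\setminus\bigcup_{x\in S}x^\perp$ is open, and its connected components are the regions of $\mathcal A$.

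\textbf{Step 1 (the map).} For a parameter $a$ in the complement, define the sign vector $\Phi(a)=(\sgn\langle a,x\rangle)_{x\in S}$.
\begin{itemize}
\item[] On each region $R$, every function $a\mapsto\langle a,x\rangle$ is continuous and never zero. Since $R$ is connected, each of these functions has a constant sign on $R$, so $\Phi$ is constant on $R$.
\item[] Conversely, the set $\{a:\sgn\langle a,x\rangle=\epsilon_x\ \forall x\in S\}$ is an intersection of open half-spaces. It is therefore convex and hence connected, so it lies inside a single region.
\end{itemize}
Two parameters therefore give the same function on $S$ exactly when they lie in the same region. Consequently $\Phi$ induces a bijection between the set of regions and the set of realized sign vectors, that is, the set of homogeneous threshold functions on $S$.

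\textbf{Step 2 (ties).} If the convention allows $\langle a,x\rangle=0$ with $\sgn0:=1$, we need to show that such an $a$ produces no new function. Given such an $a$, pick $u$ generic and replace $a$ by $a+\delta u$ for small $\delta>0$.
\begin{itemize}
\item[] For $x$ with $\langle a,x\rangle\neq0$, the sign of $\langle a+\delta u,x\rangle$ is unchanged once $\delta$ is small enough.
\item[] For $x$ with $\langle a,x\rangle=0$, we need $\langle u,x\rangle>0$ on the finite set $S_0=\{x\in S:\langle a,x\rangle=0\}$.
\end{itemize}
Choosing such a $u$ is the only delicate point. It is not possible in general: if $S_0$ contains both $x$ and $-x$, no $u$ makes both inner products positive. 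For this reason I would adopt the standard convention, as in \cite{BVthreshold}, that threshold functions are realized by parameters strictly avoiding all the hyperplanes. In the application $S\subset\{-1,1\}^{n+1}$ is finite, so the general-position realization $f(x)=\sgn(a_0+\langle a,x\rangle)$ with a nonzero argument is always available by perturbing $a_0$ slightly. With that convention, Step 1 completes the proof, and I expect no real obstacle beyond stating the convention precisely.
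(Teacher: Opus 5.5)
Your proposal is correct and is essentially the paper's argument. The paper justifies the lemma in one sentence: two parameter vectors whose linear forms do not vanish on $S$ give the same sign function exactly when they lie on the same side of every hyperplane $x^\perp$, hence in the same region. Your Step~1 supplies the connectedness and convexity details behind both directions of that equivalence.

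Your tie discussion in Step~2 also matches the paper's implicit convention, since the paper likewise only considers parameters whose linear forms do not vanish on $S$. Your side remarks are correct: ties can create extra functions when $\pm x\in S$, but for $E_n$ shifting $a_0$ slightly removes all ties.
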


Write $r(\mathcal A)$ for the number of regions of an arrangement $\mathcal A$. An \emph{intersection subspace} is a nonempty intersection of hyperplanes from $\mathcal A$. We also include the ambient space, which corresponds to the empty subfamily.

\begin{samepage}
\begin{lemma}[Counting regions of hyperplane arrangements; {\cite[Lemma~2.2(1),(2),(4)]{BVthreshold}}]\label{lem:arrangement-bounds}
Let $\mathcal A$ be an arrangement of $p\ge d$ affine hyperplanes in $\R^d$. Then
\begin{enumerate}
\item $r(\mathcal A)\le\displaystyle\sum_{j=0}^{d}\binom pj$;
\item $r(\mathcal A)$ is at least the number of intersection subspaces of $\mathcal A$;
\item if $\mathcal A$ is central, then
\[
r(\mathcal A)\le2\sum_{j=0}^{d-1}\binom{p-1}{j}.
\]
\end{enumerate}
\end{lemma}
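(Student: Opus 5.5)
The three parts of Lemma~\ref{lem:arrangement-bounds} are classical facts about hyperplane arrangements, and I will prove them in the order (2), (1), (3). The one tool I need beyond elementary geometry is the deletion--restriction recursion. For a hyperplane $H\in\mathcal A$, let $\mathcal A'=\mathcal A\setminus\{H\}$, and let $\mathcal A''$ be the arrangement induced on $H\cong\R^{d-1}$ by the distinct nonempty sets $H\cap K$, $K\in\mathcal A'$. Then
\[
r(\mathcal A)=r(\mathcal A')+r(\mathcal A'').
\]
To prove this, note that the regions of $\mathcal A'$ met by $H$ are exactly the ones split into two by $H$. These regions are in bijection with the regions of $\mathcal A''$, since each one intersects $H$ in a single relatively open convex cell. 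Every other region of $\mathcal A'$ survives unchanged as a region of $\mathcal A$.

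For part (2) I will use Zaslavsky's theorem, which states $r(\mathcal A)=\sum_{X\in L(\mathcal A)}|\mu(\mathbb R^d,X)|$, where $L(\mathcal A)$ is the intersection poset. The right-hand side has at least one nonzero term for each intersection subspace, because the Möbius function of a geometric semilattice alternates in sign and never vanishes. If I want to avoid that machinery, I can instead prove (2) directly by induction on $|\mathcal A|$ using deletion--restriction. Every intersection subspace of $\mathcal A$ either does not lie in $H$, in which case it comes from an intersection subspace of $\mathcal A'$, or it lies in $H$, in which case it is an intersection subspace of $\mathcal A''$ (the space $H$ itself being the ambient space of $\mathcal A''$). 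Counting the two kinds separately gives
\[
\#L(\mathcal A)\le\#L(\mathcal A')+\#L(\mathcal A''),
\]
and the recursion for $r$ together with induction finishes the proof. The base case is the empty arrangement, which has one region and one intersection subspace, namely the ambient space.

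Part (1) follows by induction on $p+d$ from the same recursion. The arrangement $\mathcal A''$ has at most $p-1$ hyperplanes in dimension $d-1$, so
\[
r(\mathcal A)\le\sum_{j\le d}\binom{p-1}{j}+\sum_{j\le d-1}\binom{p-1}{j}=\sum_{j\le d}\binom pj
\]
by Pascal's rule. The bounds are monotone in the number of hyperplanes, so the induction hypothesis may be applied even when $\mathcal A''$ has fewer than $p-1$ hyperplanes or when the hypothesis $p\ge d$ fails for the smaller arrangements.

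For part (3), take $\mathcal A$ central and choose a hyperplane $H\in\mathcal A$. The map $x\mapsto-x$ preserves the arrangement and swaps the two open sides of $H$, so exactly half of the regions lie on each side of $H$. Let $E$ be an affine hyperplane parallel to $H$ that does not pass through the origin. Each region on the positive side of $H$ is an open cone meeting $E$, so it meets $E$ in exactly one region of the restricted affine arrangement $\{K\cap E:K\in\mathcal A\setminus\{H\}\}$. Conversely, every region of that restricted arrangement arises in this way. The restricted arrangement has at most $p-1$ hyperplanes in dimension $d-1$, so part (1) bounds the number of regions on the positive side by $\sum_{j\le d-1}\binom{p-1}{j}$. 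Doubling this gives (3).

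The main obstacle is part (2). The bijection used in the deletion--restriction step and the bookkeeping of intersection subspaces under restriction must be checked carefully. In particular, distinct hyperplanes $K\ne K'$ may have the same trace on $H$, and hyperplanes parallel to $H$ contribute no trace at all.
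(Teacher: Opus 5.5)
Your proof is correct. The paper gives no proof of this lemma; it simply quotes it from Baldi--Vershynin. So the comparison is between a citation and your self-contained derivation.

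Your route is the elementary one. The deletion--restriction identity $r(\mathcal A)=r(\mathcal A')+r(\mathcal A'')$ drives the induction for (1). You are right to prove (1) for all $(p,d)$ without the hypothesis $p\ge d$, since $\mathcal A'$ and $\mathcal A''$ can violate it. Combined with the disjoint splitting of $L(\mathcal A)$ according to whether $X\subseteq H$, the same identity gives (2) with no M\"obius-function theory. The Zaslavsky--Rota route you mention for (2) is the standard textbook one. It yields the exact count $r(\mathcal A)=\sum_{X}|\mu(\R^d,X)|$, at the price of importing that machinery.

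Your deconing proof of (3) is exactly the construction the paper itself uses in \eqref{eq:decone}--\eqref{eq:decone-count}. Applied to $\mathcal A_n$ (with $p=2^n$, $d=n+1$), it shows that the classical bound \eqref{eq:classical-count-upper} is part (1) for $\mathcal D_n$, doubled. This is precisely how the upper half of \eqref{eq:arrangement-count} is obtained.

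The difficulty you flag at the end is already handled by your definitions. If $X\subseteq H$ is cut out by a subfamily $S$, then $\emptyset\ne X\subseteq H\cap K$ for every $K\in S\setminus\{H\}$. So hyperplanes parallel to $H$ never enter, and coinciding traces are harmless because $L(\mathcal A'')$ is a set of subspaces.

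Two small tidy-ups:
\begin{enumerate}
\item In (3), choose $E$ inside the open side of $H$ that you call positive.
\item In the deletion--restriction bijection, note that every region of $\mathcal A''$ is a connected subset of $H\setminus\bigcup\mathcal A'$. It therefore lies in a single region $R$ of $\mathcal A'$, and since $R\cap H$ is convex, it equals $R\cap H$. This gives both that $R\cap H$ is a whole region of $\mathcal A''$ and that every region of $\mathcal A''$ arises this way.
\end{enumerate}
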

\end{samepage}

Put
\[
E_n=\{(1,x):x\in\{-1,1\}^n\},\qquad
\mathcal A_n=\{e^\perp:e\in E_n\}.
\]
Lemma~\ref{lem:threshold-regions} gives $T_n=r(\mathcal A_n)$. Fix $e_0\in E_n$ and define the affine section and its induced arrangement by
\begin{equation}\label{eq:decone}
P_0=\{z:e_0\cdot z=1\},\qquad
\mathcal D_n=\{P_0\cap e^\perp:e\in E_n\setminus\{e_0\}\}.
\end{equation}
Intersecting the regions of $\mathcal A_n$ on the positive side of $e_0^\perp$ with $P_0$ gives a bijection with the regions of $\mathcal D_n$. Negation pairs these regions of $\mathcal A_n$ with those on the negative side, so
\begin{equation}\label{eq:decone-count}
T_n=r(\mathcal A_n)=2r(\mathcal D_n).
\end{equation}

\begin{samepage}
\begin{lemma}[Intersection subspaces of the threshold arrangement]\label{lem:intersections}
For all sufficiently large $n$, at least $\binom{2^n-1}{n}(1-Cn^{-99})$ of the subsets $S\subset E_n\setminus\{e_0\}$ of size $n$ are linearly independent and satisfy
\[
E_n\cap\langle S\rangle=S.
\]
Their intersections $P_0\cap\bigcap_{e\in S}e^\perp$ are distinct nonempty intersection subspaces of $\mathcal D_n$, each contained in exactly the hyperplanes indexed by $S$.
\end{lemma}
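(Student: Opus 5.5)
We derive the lemma from Theorem~\ref{thm:span} through a change of coordinates that turns $E_n$ into a copy of the cube $\{-1,1\}^{n+1}$ modulo sign.

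\emph{Reduction to sign vectors.} Each $e=(1,x)\in E_n$ is a vector in $\{-1,1\}^{n+1}$ with first coordinate $1$. Conversely, every vertex of $\{-1,1\}^{n+1}$ is $\pm e$ for exactly one $e\in E_n$. Hence, for a linearly independent set $S\subset E_n$, the condition $E_n\cap\langle S\rangle=S$ is equivalent to
\[
\langle S\rangle\cap\{-1,1\}^{n+1}=\{\pm e:e\in S\}.
\]
Now choose $w_1,\dots,w_n$ independently and uniformly from $\{-1,1\}^{n+1}$ and normalize each by its sign, $e_i=\pm w_i\in E_n$. This makes $e_1,\dots,e_n$ independent and uniform on $E_n$. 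Neither linear independence nor the span is affected by the sign normalization.

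\emph{Applying Theorem~\ref{thm:span}.} Apply Theorem~\ref{thm:span} in dimension $n+1$ with $r=n\le(n+1)-1$. With probability at least $1-C(n+1)^{-99}$, the $w_i$ are linearly independent and their span meets the cube only in $\{\pm w_i\}$. On this event the $e_i$ are distinct, since they are independent, and none of them equals $e_0$, since otherwise $\pm e_0$ would be a cube point in the span other than the $\pm e_i$.

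\emph{From random tuples to subsets.} Conditioned on being distinct and different from $e_0$, the set $\{e_1,\dots,e_n\}$ is a uniform $n$-subset of $E_n\setminus\{e_0\}$. Every such set is attained by exactly $n!$ tuples. Counting tuples gives
\[
n!\cdot\#\{\text{good }S\}\ge (2^n)^n\bigl(1-Cn^{-99}\bigr).
\]
We also have
\[
\frac{(2^n)^n}{n!}\ge\binom{2^n-1}{n},
\]
since $(2^n-1)(2^n-2)\cdots\le (2^n)^n$. This yields the claimed count.

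\emph{The intersection subspaces.} Let $S$ be good. The subspace $L_S=\bigcap_{e\in S}e^\perp$ is a line in $\R^{n+1}$, because $S$ is independent of size $n$. Since $e_0\notin\langle S\rangle$, the form $e_0\cdot z$ does not vanish identically on $L_S$. Therefore $P_0\cap L_S$ is a single point, and it is nonempty.

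A hyperplane $P_0\cap e^\perp$ of $\mathcal D_n$ contains this point exactly when $e$ vanishes on $L_S$, that is, when $e\in L_S^\perp=\langle S\rangle$. By the condition $E_n\cap\langle S\rangle=S$, this happens exactly when $e\in S$. So the point lies in exactly the hyperplanes indexed by $S$.

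Distinctness follows: the point determines the set of hyperplanes containing it, and that set is $S$, so different good $S$ give different points.

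The only step needing care is the counting passage from ordered independent samples to unordered subsets avoiding $e_0$. Collisions and the appearance of $e_0$ are already excluded on the good event, so this step is bookkeeping. All the real difficulty is contained in Theorem~\ref{thm:span}.
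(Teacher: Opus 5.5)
Your route is the paper's: apply Theorem~\ref{thm:span} in dimension $n+1$ with $r=n$, normalize each vector by its first coordinate, pass from ordered samples to $n$-subsets, and observe that the line $\langle S\rangle^\perp$ meets $P_0$ in a single point. That point lies on exactly the hyperplanes indexed by $S$. Your tuple count, using $(2^n)^n/n!\ge\binom{2^n-1}{n}$, is a clean substitute for the paper's conditioning argument. The geometric half of your proof is correct.

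There is, however, one false step. You claim that on the event of Theorem~\ref{thm:span} none of the $e_i$ equals $e_0$, ``since otherwise $\pm e_0$ would be a cube point in the span other than the $\pm e_i$.'' If, say, $e_1=e_0$, then $\pm e_0=\pm w_1$ belongs to the permitted set $\{\pm w_1,\ldots,\pm w_n\}$, so the conclusion of the theorem is not violated at all. The event $e_0\in\{e_1,\ldots,e_n\}$ has probability about $n2^{-n}$, and the good event does not exclude it. Tuples in this event give sets $S\ni e_0$. These are not subsets of $E_n\setminus\{e_0\}$, and for them your later step ``$e_0\notin\langle S\rangle$'' fails. Consequently the inequality $n!\cdot\#\{\text{good }S\}\ge(2^n)^n(1-Cn^{-99})$ is not justified as written.

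The repair is a union bound. Remove this event separately, at cost at most $n2^{-n}=O(n^{-99})$. This gives $n!\cdot\#\{\text{good }S\}\ge(2^n)^n(1-Cn^{-99}-n2^{-n})$, which still yields the lemma. This is exactly why the paper explicitly discards ``the selection of $e_0$'' alongside repetitions.
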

\end{samepage}

\begin{proof}
Apply Theorem~\ref{thm:span} in dimension $n+1$ to $n$ independent uniform sign vectors. Multiplying each vector by its first coordinate gives independent uniform points of $E_n$ without changing their span. A repetition or the selection of $e_0$ has probability $O(n^2 2^{-n})$. Conditioning on the absence of these events therefore preserves the error $O(n^{-99})$. After forgetting the order, the conditioned sample is uniform over the subsets of $E_n\setminus\{e_0\}$ of size $n$, which gives the stated count.

Since $E_n\cap\langle S\rangle=S$ and $e_0\notin S$, adjoining $e_0$ to such a set $S$ gives a basis of $\R^{n+1}$. Consequently, $\bigcap_{e\in S}e^\perp=\langle S\rangle^\perp$ is a line not contained in $e_0^\perp$ and therefore meets $P_0$ in exactly one point. A hyperplane $f^\perp$, $f\in E_n$, contains this point if and only if $f\in\langle S\rangle$, or equivalently $f\in S$. Thus the hyperplanes containing the point determine $S$, so distinct subsets give distinct intersection subspaces of $\mathcal D_n$.
\end{proof}

\begin{proof}[Proof of Theorem~\ref{thm:count}]
The intersection count in Lemma~\ref{lem:intersections} and the region bounds in Lemma~\ref{lem:arrangement-bounds} give
\begin{align}\label{eq:arrangement-count}
\binom{2^n-1}{n}(1-Cn^{-99})
&\le r(\mathcal D_n)\le\sum_{j=0}^n\binom{2^n-1}{j}\notag\\
&=\binom{2^n-1}{n}\bigl(1+O(n2^{-n})\bigr).
\end{align}
Since $T_n=2r(\mathcal D_n)$, this proves~\eqref{eq:count}.
\end{proof}

\begin{samepage}
\subsection{Proof of Corollary~\ref{thm:capacity}}\label{sec:capacity-proof}
\begin{proof}
By Theorem~\ref{thm:count},
\[
\log_2 T_n
=1+\log_2\binom{2^n-1}{n}+O(n^{-99}).
\]
To evaluate the binomial term, write
\[
\log_2\binom{2^n-1}{n}
=n^2-\log_2(n!)+\sum_{j=1}^n\log_2(1-j2^{-n})
=n^2-\log_2(n!)+O(n^2 2^{-n}).
\]
Substituting this expansion into the preceding identity gives~\eqref{eq:single-capacity}, since $n^2 2^{-n}=O(n^{-99})$.
\end{proof}
\end{samepage}

\subsection{Proof of Theorem~\ref{thm:spec}}\label{sec:spec-proof}
For $H\in\mathcal A$, let $\mathcal A^H$ be the arrangement in $H$ consisting of the distinct nonempty proper intersections $H\cap K$, $K\in\mathcal A$. Double counting chamber--facet incidences gives the following identity.

\begin{lemma}[Chamber--facet incidence; {\cite[Lemma~5.1]{Anthony}}]\label{lem:facet-incidence}
For a finite central arrangement $\mathcal A$ in $\R^d$ whose hyperplanes intersect only at the origin,
\[
\sum_C
\#\{\text{facets of }\overline C\}
=2\sum_{H\in\mathcal A}r(\mathcal A^H),
\]
where the first sum is over all chambers $C$ of $\mathcal A$.
\end{lemma}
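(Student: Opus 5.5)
The identity will follow by double counting the pairs $(C,F)$, where $C$ is a chamber of $\mathcal A$ and $F$ is a facet of $\overline C$. We will group these pairs by the hyperplane $H\in\mathcal A$ that spans $F$.

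\emph{Facets are regions of the restriction.} Every facet $F$ of a chamber closure $\overline C$ is a $(d-1)$-dimensional face, so its affine hull is a hyperplane, and that hyperplane must belong to $\mathcal A$. Fix $H\in\mathcal A$. We claim the facets contained in $H$, taken over all chambers, are exactly the closures of the regions of $\mathcal A^H$. Take a region $R$ of $\mathcal A^H$, that is, a connected component of $H\setminus\bigcup_{K\ne H,\,K\cap H\ne H}(K\cap H)$. No hyperplane $K\neq H$ contains $H$, since $K\ne H$ and both are hyperplanes. Hence, for a point $p\in R$, every $K\neq H$ avoids $p$, and a small ball around $p$ meets only $H$. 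That ball therefore lies in exactly two chambers, one on each side of $H$. Moving $p$ inside $R$ does not change the side of any $K\neq H$, so these two chambers $C_\pm$ are the same for all of $R$. Each $\overline{C_\pm}$ meets $H$ in a $(d-1)$-dimensional set containing $R$. This set equals $\overline R$: a point of $H\cap\overline{C_+}$ cannot lie strictly on the other side of any $K$ from $C_+$, so it lies in the closure of the cell of $H$ determined by the signs of $C_+$, which is $\overline R$. Conversely, a facet $F\subset H$ of $\overline C$ has relative interior of dimension $d-1$. That relative interior avoids all $K\neq H$, because otherwise $F$ would lie in $H\cap K$, which has dimension $d-2$. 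So it sits inside a single region $R$ of $\mathcal A^H$, and by the previous argument $F=\overline R$.

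\emph{Counting.} Each region of $\mathcal A^H$ is therefore a facet of exactly two chamber closures, the ones on either side of $H$, and distinct regions give distinct facets. Summing over $H$ gives $2\sum_H r(\mathcal A^H)$, and summing over chambers gives $\sum_C\#\{\text{facets of }\overline C\}$, so the two sides agree. The hypothesis that the hyperplanes meet only at the origin guarantees that the chambers are pointed cones with well-defined facets, and that $\mathcal A^H$ is a genuine central arrangement in $H$. The only point needing care is the bijection between facets in $H$ and regions of $\mathcal A^H$, in particular ruling out facets that are proper pieces of a region. The argument above handles this by using the sign vectors of points with respect to $\mathcal A\setminus\{H\}$.
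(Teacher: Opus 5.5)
Your proof is correct and is essentially the paper's argument. The paper gives no proof beyond ``double counting chamber--facet incidences'' and the citation to Anthony's Lemma~5.1; your version makes this precise by identifying the facets lying in $H$ with the closures of the regions of $\mathcal A^H$, each shared by the two chambers on either side of $H$. One small remark: your argument never uses the hypothesis that the hyperplanes meet only at the origin ($\mathcal A^H$ is central for any central $\mathcal A$, and facets are defined without pointedness), so your closing justification of that hypothesis is unnecessary but harmless.
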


\begin{proof}[Proof of Theorem~\ref{thm:spec}]
The vectors in $E_n$ span $\R^{n+1}$, so the hyperplanes of $\mathcal A_n$ intersect only at the origin. By the identification of the specification number with the number of facets~\cite[Theorem~3.3]{Anthony} and Lemma~\ref{lem:facet-incidence},
\begin{equation}\label{eq:spec-incidence}
\sigma_n=\frac{2}{T_n}\sum_{e\in E_n}r(\mathcal A_n^{e^\perp}).
\end{equation}
The upper bound $\sigma_n\le2n$ is the Fukuda--Tamura--Tokuyama bound, in the form stated in~\cite[Theorem~4.14]{Anthony}.

Fix $e\in E_n\setminus\{e_0\}$. The restricted central arrangement $\mathcal A_n^{e^\perp}$ contains $e^\perp\cap e_0^\perp$. Taking its affine section at $e_0\cdot z=1$ and pairing opposite regions gives
\[
r(\mathcal A_n^{e^\perp})=2r(\mathcal D_n^{P_0\cap e^\perp}).
\]
For each subset $S$ in Lemma~\ref{lem:intersections} that contains $e$, the nonempty intersection $P_0\cap\bigcap_{f\in S}f^\perp$ is an intersection subspace of $\mathcal D_n^{P_0\cap e^\perp}$, obtained from the hyperplanes indexed by $S\setminus\{e\}$. Distinct such subsets give distinct intersections, so Lemma~\ref{lem:arrangement-bounds} bounds the number of regions below by the number of these subsets. When we sum over $e\ne e_0$, each $S$ contributes once for each of its $n$ elements. Combining this multiplicity with the factor two in the preceding identity gives
\begin{equation}\label{eq:spec-restricted-lower}
\sum_{e\in E_n\setminus\{e_0\}}r(\mathcal A_n^{e^\perp})
\ge2n\binom{2^n-1}{n}(1-Cn^{-99}).
\end{equation}
Substituting~\eqref{eq:spec-restricted-lower} into~\eqref{eq:spec-incidence} and using the classical upper bound on $T_n$ yields
\[
\sigma_n\ge\frac{4n}{T_n}\binom{2^n-1}{n}(1-Cn^{-99})
\ge2n-Cn^{-98}.\qedhere
\]
\end{proof}

\begin{samepage}
\subsection{Proof of Theorem~\ref{thm:span}}\label{sec:span-proof}
\subsubsection{Integral relations}\label{sec:relations}

Let $v_1,\ldots,v_n$ be independent uniform vectors in $\{-1,1\}^n$, and put
\[
R=(v_1,\ldots,v_{n-1}),\qquad M=(R,v_n).
\]
\end{samepage}
For any real nonzero vector $a$, define
\begin{equation}\label{eq:concentration}
p(a)=\mathbb P\{\epsilon^{\tran}a=0\},
\end{equation}
where $\epsilon$ is a uniform sign vector of the same dimension as $a$. Write $\supp(a)$ for the number of nonzero coordinates of $a$. For $a\in\R^n\setminus\{0\}$, write $E_a=\{Ma=0\}$. Since each row annihilates $a$ with probability $p(a)$ and the rows are independent, $\mathbb P(E_a)=p(a)^n$.

The maximal minors of $R$ are the determinants $D_i$ obtained by deleting row~$i$. Subtracting the first row from every other row of each minor shows that $2^{n-2}\mid D_i$. Divide out this common factor and define
\begin{equation}\label{eq:delta}
\Delta(R)=\frac{\gcd(D_1,\ldots,D_n)}{2^{n-2}}>0
\end{equation}
when $\rank R=n-1$. If $\rank R<n-1$, set $\Delta(R)=\infty$. Let $F_n$ be the event that the conclusion of Theorem~\ref{thm:span} fails for $r=n-1$.

\begin{proposition}[Reduction to a common divisor]\label{prop:reduction}
There are absolute constants $h,c,C>0$ such that
\begin{equation}\label{eq:reduction}
\mathbb P(F_n)\le2\mathbb P\{\Delta(R)>e^{hn}\}+Ce^{-cn}.
\end{equation}
\end{proposition}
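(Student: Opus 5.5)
The plan is to control $F_n$ by splitting into a rank-deficiency event, a ``structured relation'' event and a ``generic relation'' event. The first two are exponentially rare. The generic one is converted, via a weighting trick and symmetry, into the event that some coordinate of a primitive integer null vector of $M$ is exponentially large. Since that coordinate divides $\Delta(R)$, this gives the first term of~\eqref{eq:reduction}.

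\emph{Step 1 (rank and reformulation).} First, $\{\rank R<n-1\}$ is contained in $\{\Delta(R)=\infty\}$, so it is absorbed in $\mathbb P\{\Delta(R)>e^{hn}\}$. Alternatively, it is already $O(e^{-cn})$ by the known exponential bounds for rectangular sign matrices. On $\{\rank R=n-1\}$, failure means there is $x\in\{-1,1\}^n\cap\langle v_1,\ldots,v_{n-1}\rangle$ with $x\ne\pm v_i$. Let $N=N(R)$ be the number of such extra vertices. Then
\[
\mathbb P\{N\ge1\}=\E\sum_{x\text{ extra}}\frac1N=2^n\,\E\Bigl[\frac{\mathbf 1\{v_n\text{ extra}\}}{N}\Bigr].
\]
This is the step ``assigning weights inversely proportional to the number of additional vertices.'' On $\{v_n\text{ extra}\}$, the matrix $M$ is singular. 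It has a primitive integer null vector $a$ (with $Ma=0$) satisfying $a_n\ne0$ and $\supp(a)\ge3$, because $v_n\ne\pm v_i$. Moreover $a$ is proportional to the vector of signed maximal minors of $R$ (Cramer's rule). Hence $a_n\cdot(\text{common factor})$ relates to $\gcd(D_i)$, and after removing the forced factor $2^{n-2}$ one obtains $a_n\mid\Delta(R)$. I would verify this divisibility carefully first, including the power of $2$ bookkeeping.

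\emph{Step 2 (concentration).} Next I would show that, outside an event of probability $Ce^{-cn}$, every such null vector $a$ with $\supp(a)\ge3$ has at least $\delta n$ coordinates of size $\ge e^{hn}$. I will use $\mathbb P(E_a)=p(a)^n$ together with a union bound over the relevant integer vectors, organized by concentration. If $p(a)\le 1/2-\eta$, the subspace/Littlewood--Offord bound of Kahn--Koml\'os--Szemer\'edi~\cite[Lemma~2]{KKS} gives total probability $O(e^{-cn})$ after the weight $2^n$ is accounted for. (The weight only costs a factor $2^n$ against $p(a)^n\le(1/2-\eta)^n$.) If $p(a)>1/2-\eta$ with $\supp\ge3$, inverse Littlewood--Offord arguments force $a$ to be almost constant on a bulk of coordinates with few exceptions. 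These cases can be enumerated and bounded by $(3/4+o(1))^n\cdot$(entropy). For the remaining vectors, small height with $\supp(a)$ large forces $p(a)$ small, again by Littlewood--Offord. Hence a rich null vector must have many coordinates exceeding $e^{hn}$, for $h$ small.

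\emph{Step 3 (symmetry to the last coordinate).} The columns of $M$ are exchangeable. Therefore the event ``$a_n$ is one of the $\ge\delta n$ large coordinates'' has probability at least a constant fraction of the event ``many coordinates are large.'' Combined with the $1/N$ weight, which makes each extra vertex count once, this turns $2^n\E[\mathbf 1\{v_n\text{ extra}\}/N]$ into at most $2\mathbb P\{|a_n|>e^{hn},\ \rank R=n-1\}+Ce^{-cn}$. Here the factor $2$ comes from the sign pairing $\pm x$. Finally $|a_n|\le\Delta(R)$ by divisibility, which gives~\eqref{eq:reduction}.

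The main obstacle I expect is Step 3: making the weighting rigorous. One must relate the $2^n$-inflated expectation to a plain probability about $R$ alone, and to justify this I would first try to show that $N$ is comparable to the number of large coordinates, so that the exchangeability averaging does not lose more than a constant. A secondary difficulty is the near-$1/2$ concentration range in Step 2, where the $2^n$ weight is most dangerous.
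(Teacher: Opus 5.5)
You have the paper's top-level architecture: the $1/N$ weighting, a large-coefficient lemma, exchangeability of columns, and $|a_n|\mid\Delta(R)$. But Step~2, which carries the real content, fails as proposed.

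\textbf{Step 2.} Read literally, your claim that all relations with $p(a)\le 1/2-\eta$ contribute $O(e^{-cn})$ after the factor $2^n$ is false. With probability of order $n^22^{-n}$ two rows of $M$ coincide. Then $M$ is singular, and its null vector is that of an $(n-1)\times n$ sign matrix, generically of full support, so $p(a)\le 3/8$. After multiplying by $2^n$ this contribution is of order $n^2$. Indeed Lemma~\ref{lem:subspace-concentration} on all of $\R^n$ gives only $\binom{n}{n-1}u$. Such relations are harmless only because their coefficients are huge.

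Restricting to vectors with few large coordinates does not rescue the threshold $1/2-\eta$. Fixing the small coordinates costs $e^{\Theta(hn^2)}$ choices, while the subspace bound on the remaining span of dimension $\Theta(n)$ gains only $u^{\Theta(n)}=e^{-O(n)}$. The paper therefore splits at $p(a)=e^{-\eta n}$:
\begin{itemize}
\item Below it, the gain is $e^{-\eta nm}$ with $m=\lceil n/2\rceil$, which beats the entropy $(3e^{hn})^m$ once $h=\eta/2$.
\item Above it, support larger than $s_0$ needs the Kahn--Koml\'os--Szemer\'edi estimate \cite[Corollary~2]{KKS}, which gives probability at most $8^{-n}$.
\item Support $3\le s\le s_0$ uses $p(a)\le 3/8$ together with the subspace bound on each support.
\end{itemize}
Your case ``$p(a)>1/2-\eta$, $\supp(a)\ge3$'' is empty, since $p(a)\le 3/8$ for support at least three. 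Your claim that small height and large support force $p(a)$ to be small is wrong: a $\pm1$ vector with even support $s$ has $p(a)\asymp s^{-1/2}$. So the whole intermediate range $e^{-\eta n}<p(a)\le 3/8$ has no argument in your plan.

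\textbf{Step 3.} This step is left unresolved, and comparing $N$ with the number of large coordinates is not needed. The missing observation is that when $v_n$ is extra, $\langle v_1,\ldots,v_n\rangle=\langle v_1,\ldots,v_{n-1}\rangle$. Hence $2^n/N$ equals
\[
w(M)=\frac{2^n}{|\langle v_1,\ldots,v_n\rangle\cap\{-1,1\}^n|-2(n-1)},
\]
set to zero unless $\rank M=n-1$ and more than $n/2$ coordinates of the primitive null vector exceed $e^{hn}$. This weight is invariant under column permutations. Exchangeability then gives
\[
\E w\le\frac2n\sum_i\E\bigl[w\mathbf 1_{\{|a_i|>e^{hn}\}}\bigr]=2\E\bigl[w\mathbf 1_{\{|a_n|>e^{hn}\}}\bigr].
\]
The factor $2$ comes from ``more than half the coordinates are large'', not from the pairing $\pm x$.

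You also cannot then drop the weight and write $\mathbb P\{|a_n|>e^{hn}\}$. Instead, condition on $R$. The quantity $\E[w\mathbf 1_{\{|a_n|>e^{hn}\}}\mid R]$ is the fraction of extra vertices whose relation has $|a_n|>e^{hn}$. It is therefore at most $1$, and by Lemma~\ref{lem:divisibility} it vanishes unless $\Delta(R)>e^{hn}$.

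\textbf{Divisibility.} Your heuristic that $a$ is proportional to the vector of signed maximal minors of $R$ is inaccurate; that vector is a left null vector of $R$. The divisibility instead comes from $a_nD_{i,j}=-a_jD_i$, where $D_{i,j}$ is $D_i$ with column $j$ replaced by $v_n$. Combine this with $2^{n-2}\mid D_{i,j}$ and B\'ezout.
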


\begin{lemma}[Subspace concentration bound; {\cite[Lemma~2]{KKS}}]\label{lem:subspace-concentration}
Let $Y$ be an $m\times q$ matrix with independent uniform sign entries. Suppose $S\subset\R^q\setminus\{0\}$ lies in a linear subspace of dimension $d$, where $1\le d\le m+1$, and $p(a)\le u$ for every $a\in S$. Then
\begin{equation}\label{eq:subspace-concentration}
\mathbb P\{Ya=0\text{ for some }a\in S\}
\le\binom m{d-1}u^{m-d+1}.
\end{equation}
\end{lemma}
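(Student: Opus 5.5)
The plan is the standard sequential-conditioning argument, run over the rows of $Y$ one at a time. Let $W$ be a $d$-dimensional subspace containing $S$, and let $y_1,\ldots,y_m$ be the rows of $Y$. For $0\le i\le m$ set
\[
L_i=W\cap\{x: y_jx=0\text{ for all }j\le i\},
\]
so that $L_0=W$, the sequence $L_i$ is decreasing, and $L_i$ depends only on $y_1,\ldots,y_i$. Call row $i$ a \emph{pivot} if $\dim L_i<\dim L_{i-1}$. Equivalently, row $i$ is a pivot when $y_i$ does not vanish identically on $L_{i-1}$, and each pivot drops the dimension by exactly one.

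First I record the structure of the event. Suppose $Ya=0$ for some $a\in S$. Then $a\in L_m$, so $\dim L_m\ge1$ and there are at most $d-1$ pivots. Moreover $a\in L_{i-1}$ for every $i$, so $L_{i-1}\cap S\neq\emptyset$ for all $i$. Therefore there is a set $J\subset\{1,\ldots,m\}$ with $|J|=d-1$ that contains all pivots; such a set exists because $d-1\le m$. For every $i\notin J$, the following event $A_i$ then holds:
\[
A_i=\bigl\{L_{i-1}\cap S\ne\emptyset\text{ and }y_i\text{ vanishes on }L_{i-1}\bigr\}.
\]
Taking a union bound over the $\binom m{d-1}$ choices of $J$, it suffices to show that for each fixed $J$,
\[
\mathbb P\Bigl(\bigcap_{i\notin J}A_i\Bigr)\le u^{m-d+1}.
\]

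To prove this, I condition on $y_1,\ldots,y_{i-1}$. On the event $L_{i-1}\cap S\neq\emptyset$, choose a canonical element $b_i\in L_{i-1}\cap S$ as a function of $y_1,\ldots,y_{i-1}$; this is possible since these rows take finitely many values. If $y_i$ vanishes on $L_{i-1}$, then in particular $y_ib_i=0$. Since $y_i$ is a uniform sign vector independent of the earlier rows, the conditional probability of $A_i$ is at most $p(b_i)\le u$. Multiplying these conditional bounds over $i\notin J$ in increasing order of $i$, by the tower property, gives the bound $u^{m-(d-1)}$, which proves the lemma.

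The only delicate point is that the witness $a$ is not determined by the first $i-1$ rows, so one cannot directly bound $\mathbb P\{y_ia=0\}$ by $p(a)$. The argument handles this by replacing $a$ with the predictable surrogate $b_i$. This works because the non-pivot condition is that $y_i$ kills all of $L_{i-1}$, not just $a$.
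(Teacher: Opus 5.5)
Your proof is correct, and it shares the paper's skeleton. Both arguments take a union bound over $\binom m{d-1}$ index sets $J$ whose rows, restricted to $W$, span all the restricted rows; your pivots are exactly the greedy basis of $y_1|_W,\ldots,y_m|_W$. Both also replace the true witness $a$, which depends on every row, by a surrogate that is independent of the rows being tested. Where you differ is in how that surrogate is produced. The paper fixes $J$ and chooses a single vector $a_J\in S$ using only the rows in $J$. Every row outside $J$ must then annihilate $a_J$, because its restriction to $W$ is a combination of the rows in $J$, and one conditioning on the rows in $J$ gives $u^{m-d+1}$ directly. You instead choose a fresh surrogate $b_i\in L_{i-1}\cap S$ from all preceding rows, including non-pivot ones, and chain the one-step bounds with the tower property. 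The paper's route is shorter and avoids the filtration bookkeeping. Yours uses only the conditional bound $\mathbb P\{y_ib_i=0\mid y_1,\ldots,y_{i-1}\}\le u$ for a predictable $b_i$. It therefore carries over to rows revealed sequentially that satisfy only conditional anti-concentration, a setting where the paper's conditioning on an arbitrary, interleaved set $J$ of rows would not be available.
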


\begin{proof}
Restrict the rows of $Y$, viewed as linear functionals, to a subspace of dimension $d$ containing $S$. If their common kernel meets $S$, the restricted rows span a space of dimension at most $d-1$. Hence some $d-1$ rows span all the restrictions, with redundant rows added if necessary. For each fixed set of $d-1$ row indices, choose a vector $a\in S$ annihilated by those rows whenever one exists, using only the selected rows. If their restrictions span all the restricted rows, every remaining row must also annihilate this $a$. The remaining rows are independent of the selected rows, so the conditional probability that they all annihilate $a$ is at most $u^{m-d+1}$. A union bound over the $\binom m{d-1}$ choices proves the claim.
\end{proof}

\begin{lemma}[Large coefficients in integer relations]\label{lem:large-coefficients}
There are absolute constants $h,c,C>0$ such that, with probability at least $1-Ce^{-cn}$ over $R$, every relation
\begin{equation}\label{eq:integer-relation}
\sum_{i<n}a_i v_i+a_n v=0,
\qquad a\in\Z^n,\quad a_n\ne0,
\end{equation}
with $v\in \{-1,1\}^n\setminus\{\pm v_i:i<n\}$ has more than $n/2$ coordinates satisfying $|a_i|>e^{hn}$.
\end{lemma}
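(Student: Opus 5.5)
The plan is a union bound over the possible shapes of the relation vector $a$, organized as in Kahn--Koml\'os--Szemer\'edi: split according to the concentration $p(a)$, use Lemma~\ref{lem:subspace-concentration} when the concentration is small, and use a counting argument when it is large.

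\textbf{Reformulation.} Write $a=(a',a_n)$ with $a'\in\Z^{n-1}$. A relation \eqref{eq:integer-relation} says that every row $\epsilon_j$ of $R$ satisfies $\epsilon_j^{\tran}a'\in\{\pm a_n\}$; equivalently, for each $j$ one of the lifted vectors $(\epsilon_j,\pm1)$ annihilates $a$. The hypothesis $v\neq\pm v_i$ excludes $\supp(a')=1$, so $\supp(a')\ge2$. Dividing by the gcd, we may take $a$ primitive. Up to scaling, $a$ is then the kernel vector of the $n\times n$ matrix $(R,v)$. Unless this matrix has corank at least two, $a$ is proportional to a vector of $(n-1)\times(n-1)$ minors, so every coordinate is at most $n^{n/2}$. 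I will treat corank at least two separately, since it has probability $e^{-cn}$ by the rank estimates cited in the introduction. This gives an a priori bound $|a_i|\le e^{Cn\log n}$, which limits the number of candidate relations.

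\textbf{Bad event.} Suppose at most $n/2$ coordinates satisfy $|a_i|>e^{hn}$. Then there is a set $I$ with $|I|\ge n/2$ on which $|a_i|\le e^{hn}$. Fix a threshold $u=e^{-c_0n}$.

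\textbf{Small concentration, $p(a)\le u$.} The last coordinate $v$ is chosen adversarially, but it contributes only $\pm a_n$. So I bound the probability that a row satisfies $\epsilon^{\tran}a'\in\{\pm a_n\}$ by $2\sup_x\mathbb P\{\epsilon^{\tran}a'=x\}$. I expect this sup is $\lesssim p$ for the relevant vectors, by the usual even/odd reduction of Littlewood--Offord. I then apply Lemma~\ref{lem:subspace-concentration} to the $n\times(n+1)$ lifted problem, taking $S$ as the full candidate set, $d=n$, and $m=n$. This gives only $n\,(2u)$. To gain, I exploit that $a$ is forced by $n-1$ rows: conditionally on $R$, the $n$th row must hit one of two values. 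This costs one factor $u$, but the union over the $2^n$ choices of $v$ is absorbed because $v$ is a function of $(R,a)$. I would redo the proof of Lemma~\ref{lem:subspace-concentration} with the last column's sign treated as a free bit per row, which adds a factor $2^{d-1}$ only on the selected rows.

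\textbf{Large concentration, $p(a)>u$.} Here I count. By Hal\'asz-type inverse Littlewood--Offord, if $p(a)>e^{-c_0n}$, then most coordinates of $a$ take values in a short generalized arithmetic progression. Combined with $|a_i|\le e^{hn}$ on $I$, this leaves at most $e^{(C h+ C c_0)n}$ choices for $a_I$ up to scaling. For each such choice, conditioning on the columns of $R$ in $I$ forces the $n$ rows into cosets of probability at most $\max(p,\tfrac12)$ per row, which yields $(3/4)^n$-type bounds. For $h,c_0$ small, the count times the probability is $e^{-cn}$.

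\textbf{Main obstacle.} The hardest step is the middle regime: $a$ has unbounded coefficients outside $I$, and $p(a)$ is moderate. The free coordinates outside $I$ are not counted, so they must be eliminated. I would first try a dimension reduction: project out the columns indexed by $I^c$ using $|I^c|\le n/2$ rows, then apply Lemma~\ref{lem:subspace-concentration} to the remaining $\ge n/2$ rows with $d\approx|I^c|+1$. The small coordinates $a_I$ supply the concentration bound on those rows.
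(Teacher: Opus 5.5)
There is a genuine gap: your case split does not close. In the small-concentration step you apply Lemma~\ref{lem:subspace-concentration} with $d=n$. The free bits of $v$ on the $n-1$ selected rows then cost $2^{n-1}$; this is your own factor $2^{d-1}$. It is unavoidable, because $a$ is determined by those rows only after those bits are fixed, so ``$v$ is a function of $(R,a)$'' gains nothing. The bound becomes $n2^{n-1}\cdot 2u=n2^nu$, which is small only when $u<2^{-n}$. Your large-concentration step, by contrast, needs $c_0$ small, and it fails even then:
\begin{itemize}
\item Fixing $a_I$ leaves the unbounded coordinates $a_{I^c}$ uncounted. Conditioning on the columns in $I$ therefore gives no per-row bound on the event $(R,v)a=0$.
\item The asserted count $e^{(Ch+Cc_0)n}$ for $a_I$ is unsupported. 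Choosing $n/2$ entries from even a polynomial-size progression already gives $n^{\Theta(n)}$ choices, which $(3/4)^n$ cannot absorb.
\item The bounded-rank inverse Littlewood--Offord theorems require $p(a)\ge n^{-C}$, not $p(a)>e^{-c_0n}$.
\end{itemize}
The ``middle regime'' you flag is therefore not a loose end but the whole difficulty.

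The paper uses the small coordinates only where the concentration is small, and handles large concentration without them. First it randomizes $v$. A uniform last column $v_n$ hits a bad vertex with conditional probability at least $2^{-n}$, so it suffices to beat $2^n\,\mathbb P(\bigcup E_a)$ with $\mathbb P(E_a)=p(a)^n$. This is the same cost as your free bits; note also that $\mathbb P\{\epsilon^{\tran}a'\in\{\pm a_n\}\}=2p(a)$ exactly, so no even/odd reduction is needed.

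For $p(a)\le e^{-\eta n}$, fix $I$ with $|I|=m=\lceil n/2\rceil$ and integers $|b_i|\le e^{hn}$. The candidates with $a_I=b$ span a space of dimension at most $n-m+1$, so Lemma~\ref{lem:subspace-concentration} gives $2^ne^{-\eta nm}$. This is super-exponentially small and absorbs $4^n\binom nm(3e^{hn})^m$ when $h=\eta/2$. It resembles the dimension reduction you sketch at the end, but it works only because $u^m$ with $u=e^{-\eta n}$ beats $e^{hnm}$. The per-row bound comes from $p(a)$ itself, not from $a_I$.

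For $p(a)>e^{-\eta n}$, the paper splits by support:
\begin{itemize}
\item Supports $3\le s\le s_0$ use $p(a)\le 3/8$ and the subspace lemma on each support, giving $Cn^{2s_0}(3/4)^n$.
\item Supports $>s_0$ are excluded by Corollary~2 of Kahn--Koml\'os--Szemer\'edi with $\gamma=1/8$. The square matrix $(R,v_n)$ has such a null vector with probability at most $8^{-n}$, which beats $2^n$.
\end{itemize}
That $\gamma^n$ bound with $\gamma<1/2$ is the ingredient your proposal is missing. It cannot be replaced by a progression count.
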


\begin{proof}
We bound the probability that there exists a sign vector $v\notin\{\pm v_i:i<n\}$ in the span admitting such an integer relation with at most $n/2$ coordinates satisfying $|a_i|>e^{hn}$. Conditional on $R$, if such a vector exists, the independent uniform column $v_n$ is one of these vectors with probability at least $2^{-n}$. The probability to be bounded is therefore at most $2^n$ times the probability of the corresponding union of the events $E_a$.

Take $\gamma=1/8$ in~\cite[Corollary~2]{KKS}. There are constants $s_0\ge4$ and $\eta>0$ such that
\[
\mathbb P\left(\bigcup\{E_a:\supp(a)>s_0,\ p(a)>e^{-\eta n}\}\right)
\le8^{-n}.
\]
After multiplication by $2^n$, this class contributes at most $4^{-n}$ to the probability under consideration. Relations of support one are impossible, and those of support two with $a_n\ne0$ give only $v=\pm v_i$. For $3\le s\le s_0$, the Littlewood--Offord inequality~\cite{KKS,Odlyzko} gives $p(a)\le3/8$. Applying Lemma~\ref{lem:subspace-concentration} with $d=s$ on each support of size $s$ containing $n$, and summing over these supports, bounds the total contribution by
\begin{equation}\label{eq:small-support}
2^n\sum_{s=3}^{s_0}\binom{n-1}{s-1}\binom n{s-1}
(3/8)^{n-s+1}
\le Cn^{2s_0}(3/4)^n.
\end{equation}
The first binomial coefficient counts supports, and the second counts rows.

It remains to consider $p(a)\le e^{-\eta n}$. Put $m=\lceil n/2\rceil$ and $h=\eta/2$. Fix $I\subset[n]$ of size $m$ and integers $b_i$, $i\in I$, with $|b_i|\le e^{hn}$. The set
\[
S=\{a\in\R^n\setminus\{0\}:a_i=b_i\ (i\in I),\ p(a)\le e^{-\eta n}\}
\]
lies in a translate of a subspace of dimension $n-m$, so its linear span has dimension at most $n-m+1$. Lemma~\ref{lem:subspace-concentration} therefore gives
\[
\mathbb P\left(\bigcup_{a\in S}E_a\right)\le2^n e^{-\eta nm}.
\]
Summing over the choices of $I$ and the at most $(3e^{hn})^m$ coefficient assignments, and multiplying by the initial factor $2^n$, gives
\[
4^n\binom nm(3e^{hn})^m e^{-\eta nm}
\le\exp\left(-\frac12\eta nm+Cn\right).
\]
A vector with at most $n/2$ large coordinates has at least $m$ small coordinates, so every remaining violation is included. Together with~\eqref{eq:small-support} and $4^{-n}$, this proves the lemma.
\end{proof}

\begin{lemma}[Divisibility of maximal minors]\label{lem:divisibility}
Suppose $\rank R=n-1$, $v\in \{-1,1\}^n$, and
\[
\sum_{i<n}a_i v_i+a_n v=0,
\qquad a\in\Z^n,\quad a_n\ne0,\quad \gcd(a_1,\ldots,a_n)=1.
\]
Then $|a_n|\mid\Delta(R)$.
\end{lemma}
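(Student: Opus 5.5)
The plan is to read $\Delta(R)$ as the order of a finite abelian group and $|a_n|$ as the order of one element of that group. Lagrange's theorem then gives $|a_n|\mid\Delta(R)$.

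Let $W=\langle v_1,\ldots,v_{n-1}\rangle$, which has dimension $n-1$ by the rank assumption. Put $L=W\cap\Z^n$, the saturated lattice. Let $\Lambda_0=R\Z^{n-1}\subset L$ be the lattice generated by the columns of $R$.

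\emph{Step 1: the index of $\Lambda_0$ in $L$.} By the Smith normal form of the $n\times(n-1)$ integer matrix $R$, the index $[L:\Lambda_0]$ equals $\gcd(D_1,\ldots,D_n)$. Choose a basis of $\Z^n$ whose first $n-1$ vectors form a basis of $L$; this is possible because $L$ is saturated, hence a direct summand of $\Z^n$. In that basis $R$ becomes $\binom{B}{0}$ with $B$ square, and $[L:\Lambda_0]=|\det B|$. Since unimodular changes of basis preserve the gcd of the maximal minors, that gcd also equals $|\det B|$.

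\emph{Step 2: remove the factor $2^{n-2}$ using parity.} Let $\Gamma=\Z\mathbf 1+2\Z^n$ be the set of integer vectors whose coordinates all have the same parity. Every sign vector lies in $\Gamma$, so $\Lambda_0\subset L_\Gamma:=L\cap\Gamma$.
\begin{itemize}
\item The quotient $L/L_\Gamma$ embeds into $\Z^n/\Gamma\cong\mathbb F_2^n/\langle\mathbf 1\rangle$, and its image is the image of $L\otimes\mathbb F_2$.
\item Because $L$ is a direct summand of $\Z^n$, a $\Z$-basis of $L$ stays linearly independent mod $2$. So $L$ reduces to an $(n-1)$-dimensional subspace of $\mathbb F_2^n$.
\item This subspace contains $\mathbf 1$, since $v_1\equiv\mathbf 1\pmod 2$. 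Its image modulo $\langle\mathbf 1\rangle$ therefore has dimension $n-2$, so $[L:L_\Gamma]=2^{n-2}$.
\end{itemize}
Combining with Step 1 gives $[L_\Gamma:\Lambda_0]=\gcd(D_i)/2^{n-2}=\Delta(R)$. This step is where the normalization must be matched exactly, and I expect it to be the main obstacle. The delicate points are the saturation argument and the fact that $\mathbf 1$ lies in the mod-$2$ reduction.

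\emph{Step 3: the order of $v$.} Since $a_n\neq0$, the relation shows $v\in W\cap\Gamma\cap\Z^n=L_\Gamma$. Let $k\ge1$ be the order of $v$ modulo $\Lambda_0$. The set $\{t\in\Z:tv\in\Lambda_0\}$ is the subgroup $k\Z$ and contains $a_n$, so $k\mid a_n$; write $a_n=kt$.
\begin{itemize}
\item Write $kv=Rb$ with $b\in\Z^{n-1}$, and set $a'=(a_1,\ldots,a_{n-1})$.
\item The relation gives $a_nv=-Ra'$. Hence $R(a_nb+ka')=0$, and injectivity of $R$ gives $a_nb=-ka'$, i.e.\ $tb=-a'$.
\item So $t$ divides every $a_i$ with $i<n$, and also $a_n$. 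Since $\gcd(a_1,\ldots,a_n)=1$, this forces $t=\pm1$.
\end{itemize}
Thus the order of $v$ in $L_\Gamma/\Lambda_0$ is exactly $|a_n|$. By Lagrange's theorem it divides the group order $\Delta(R)$, which proves the lemma.
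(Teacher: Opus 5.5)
Your proof is correct, but it takes a different route from the paper.

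The paper works one minor at a time. Replacing column $j$ of the minor defining $D_i$ by the corresponding coordinates of $v$ gives a $\pm1$ determinant $D_{i,j}$. Multilinearity (Cramer's rule) gives $a_nD_{i,j}=-a_jD_i$, and subtracting the first row gives $2^{n-2}\mid D_{i,j}$. Hence $a_n\mid a_jD_i/2^{n-2}$ for every $j$, and B\'ezout with $\gcd(a_1,\ldots,a_n)=1$ gives $a_n\mid D_i/2^{n-2}$ for each $i$.

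You instead identify $\Delta(R)$ once and for all as the order of the finite group $L_\Gamma/\Lambda_0$. Smith normal form gives $[L:\Lambda_0]=\gcd(D_1,\ldots,D_n)$, and a mod-$2$ dimension count (saturation of $L$ together with $v_1\equiv\mathbf 1$) gives $[L:L_\Gamma]=2^{n-2}$. You then show that $|a_n|$ is exactly the order of $v$ in that group, with primitivity of $a$ playing the role of B\'ezout.

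The paper's argument is shorter and needs nothing beyond determinant manipulations. Yours needs a little lattice theory, but in exchange it:
\begin{itemize}
\item explains where the factor $2^{n-2}$ comes from (it is the index of the parity sublattice $L_\Gamma$ in $L$);
\item yields the integrality of $\Delta(R)$ as a byproduct;
\item gives the sharper statement that $|a_n|$ is the order of an element in a group of order $\Delta(R)$;
\item fits naturally with the later identification $\Delta(R)=|\Cok(B)|$ as the order of a finite abelian group.
\end{itemize}
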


\begin{proof}
In the minor defining $D_i$, replace column~$j$ by the corresponding coordinates of $v$, and call the resulting determinant $D_{i,j}$. Multilinearity and subtraction of the first row from the other rows of the matrix defining $D_{i,j}$ give, respectively,
\[
a_nD_{i,j}=-a_jD_i,\qquad 2^{n-2}\mid D_{i,j}.
\]
Thus $a_n$ divides $a_jD_i/2^{n-2}$ for every $j<n$. It also divides $a_nD_i/2^{n-2}$. Since the coordinates of $a$ have greatest common divisor one, B\'ezout's identity implies that $a_n\mid D_i/2^{n-2}$ for every~$i$. Taking the greatest common divisor proves the assertion.
\end{proof}

\begin{proof}[Proof of Proposition~\ref{prop:reduction}]
Take $h$ from Lemma~\ref{lem:large-coefficients}. When $\rank M=n-1$, choose a primitive integer null vector $a$, normalized by $\gcd(a_1,\ldots,a_n)=1$. Its absolute coordinates are uniquely determined because the nullspace is one-dimensional. Define
\begin{equation}\label{eq:weight}
w(M)=\frac{2^n}{|\langle v_1,\ldots,v_n\rangle\cap \{-1,1\}^n|-2(n-1)}
\end{equation}
if $\rank M=n-1$ and more than $n/2$ coordinates of $a$ satisfy $|a_i|>e^{hn}$. Otherwise, set $w(M)=0$. Under the conditions for using this fraction, $\supp(a)>2$ for all sufficiently large $n$. Since the nullspace is one-dimensional, no two columns can be equal up to sign. Their span therefore contains at least $2n$ distinct cube vertices, which makes the denominator positive. Expressions involving $a$ below are interpreted as zero when $w(M)=0$.

Suppose $\rank R=n-1$, the event $F_n$ occurs, and the conclusion of Lemma~\ref{lem:large-coefficients} holds. Every additional last column has positive weight, and every other last column has weight zero. There are
\[
|\langle v_1,\ldots,v_{n-1}\rangle\cap \{-1,1\}^n|-2(n-1)
\]
additional columns, each sampled with probability $2^{-n}$. Thus $\E(w(M)\mid R)=1$ on this event, and
\begin{equation}\label{eq:weight-bound}
\mathbb P(F_n)\le\mathbb P\{\rank R<n-1\}+Ce^{-cn}+\E w(M).
\end{equation}
Both $w(M)$ and the distribution of $M$ are invariant under column permutations. Since more than half the coordinates are large when $w(M)>0$,
\begin{align}\label{eq:permutations}
\E w(M)
&\le\frac2n\sum_{i=1}^n\E\bigl[w(M)\mathbf 1_{\{|a_i|>e^{hn}\}}\bigr]\notag\\
&=2\E\bigl[w(M)\mathbf 1_{\{|a_n|>e^{hn}\}}\bigr].
\end{align}
If the random variable in the last expectation is nonzero, then $M$ has rank $n-1$ and $a_n\ne0$. Thus $R$ has full column rank and $v_n$ is additional, so Lemma~\ref{lem:divisibility} gives $|a_n|\mid\Delta(R)$. For fixed $R$ of full column rank with additional cube vertices, averaging over $v_n$ with the weight $w(M)$ gives the fraction of these vertices for which $w(M)>0$ and $|a_n|>e^{hn}$. This fraction is at most one and vanishes unless $\Delta(R)>e^{hn}$. If $R$ does not have full column rank or has no additional cube vertices, the conditional expectation is zero. Therefore
\begin{equation}\label{eq:conditional-weight}
\E\bigl[w(M)\mathbf 1_{\{|a_n|>e^{hn}\}}\mid R\bigr]
\le\mathbf 1_{\{\rank R=n-1,\ \Delta(R)>e^{hn}\}}.
\end{equation}
Combining~\eqref{eq:weight-bound}--\eqref{eq:conditional-weight} proves~\eqref{eq:reduction}, because the convention $\Delta(R)=\infty$ includes the case where $R$ does not have full column rank in the event on its right side.
\end{proof}

\subsubsection{Integral cokernels}\label{sec:cokernels}
\begin{proposition}[Cokernel bound]\label{prop:cokernel}
Let $B$ have independent uniform $0/1$ entries and size $N\times(N+1)$, and let $\Cok(B)=\Z^N/B\Z^{N+1}$. Set $\tau=|\Cok(B)|$, with $\tau=\infty$ when the cokernel is not finite. For every fixed $b>0$ and all sufficiently large $N$, with probability at least $1-C_bN^{-99}$, the cokernel is finite and
\begin{equation}\label{eq:cokernel-bound}
\tau\le e^{bN}.
\end{equation}
\end{proposition}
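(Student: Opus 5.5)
We split the bad event $\{\tau>e^{bN}\}$, with $\tau=\infty$ allowed, according to the primes dividing $\tau$. Since $|\Cok(B)|=\prod_p|\Cok(B)_p|$, large order forces either a single prime $p$ with large Sylow part, or many primes contributing. We argue prime by prime using the local rank structure of $B$ modulo $p$. Three ranges of primes are treated: small primes $p\le P_0$ for a large constant $P_0=P_0(b)$; moderate primes $P_0<p\le e^{c N}$; and huge primes $p>e^{cN}$. Infinite cokernel is the same as $\rank B<N$. This is the rank-deficiency event for a rectangular $0/1$ matrix with one extra column, and it is exponentially unlikely by the Kahn--Koml\'os--Szemer\'edi type bounds, for instance Lemma~\ref{lem:subspace-concentration} combined with Littlewood--Offord for the row space. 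So we may assume the cokernel is finite.

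\emph{Small primes.} For fixed $p$, Maples's comparison theorem~\cite[Theorem~1.2]{Maples} shows that the distribution of $\Cok(B)\otimes\Z_p$ is close to the Cohen--Lenstra-type limit for $N\times(N+1)$ matrices. In that limit $\mathbb E|\Cok(B)_p|^k$ is bounded for each fixed $k$, by the moment computations of Wood/Nguyen--Wood: the $k$-th moment counts surjections to groups and is finite for the corank-one shape. Markov's inequality with a large $k$ then controls $\mathbb P\{|\Cok(B)_p|>e^{bN/(3P_0)}\}$ by $O(e^{-kbN/(3P_0)})$ plus the comparison error. The comparison error is polynomial in $N$; we take it to be $O(N^{-100})$ after fixing parameters, and this step produces the $N^{-99}$ loss. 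Summing over $p\le P_0$ bounds the small-prime contribution to $\log\tau$ by $bN/3$.

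\emph{Moderate primes.} We use Nguyen--Wood's rank estimate modulo $p$~\cite[Sections~6--7]{NW}. Uniformly for $p\le e^{cN}$,
\[
\mathbb P\{\rank_{\mathbb F_p}B\le N-j\}\le C\bigl(p^{-j(j+1)}+e^{-cN}\bigr),
\]
or in the weaker form $\mathbb P\{p\mid\tau\}\le C/p^{2}+e^{-cN}$ for the corank-one shape. For $N\times(N+1)$ matrices, $p\mid\tau$ requires the corank of $B$ mod $p$ to be at least $1$. That has probability about $p^{-2}$, since an $N\times(N+1)$ matrix needs two extra dependencies. We then bound the higher moment
\[
\mathbb E\Bigl[\prod_{P_0<p\le e^{cN},\,p\mid\tau}p^{\theta}\Bigr]
\]
for a small $\theta>0$. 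Expanding over squarefree products and using approximate independence across primes, which is where the higher-moment version of the Nguyen--Wood argument is needed, makes this expectation $1+O(P_0^{\theta-1})+O(N^{-100})$. Markov's inequality then shows the moderate-prime contribution exceeds $bN/3$ with tiny probability. Higher prime powers $p^2\mid\tau$ are handled by the same rank bound with $j\ge2$.

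\emph{Huge primes.} If $p>e^{cN}$ divides $\tau$, the rows of $B$ are dependent modulo $p$. Equivalently, some vector in the left kernel mod $p$ is annihilated by all $N+1$ columns. Inverse Littlewood--Offord theory, applied to the structured-versus-unstructured dichotomy for this normal vector, shows the event has probability $e^{-cN}$ unless the normal vector is structured. Structured normals have few possibilities and are counted directly. Moreover, $\tau$ divides each $N\times N$ minor, which is at most $N^{N/2}$ by Hadamard's inequality. Hence at most $O(N)$ huge primes can divide $\tau$, and the union bound is affordable.

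The main obstacle is the moderate range. We need uniform-in-$p$ control of the joint distribution of divisibility by several primes, strong enough to give a summable moment with only polynomial error. The first attempt is to run the Nguyen--Wood swapping/moment argument with test modulus $\prod p_i$ rather than a single prime, keeping the error $O(N^{-100})$.
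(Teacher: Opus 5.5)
Your proposal has two genuine gaps, one in the moderate range and one in the huge range of primes.

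\emph{Moderate primes.} To bound $\E\prod_{P_0<p\le e^{cN},\,p\mid\tau}p^{\theta}$ you expand over squarefree products. That needs joint divisibility estimates for arbitrarily large sets of primes. There are astronomically many such sets, so even an error $e^{-cN}$ per set cannot be summed. No ``approximate independence across primes'' of this strength is available from Nguyen--Wood or Maples, and you concede this step is not carried out. Starting the range at a constant $P_0$ is what forces it: $\sum_{p>P_0}p^{-2}$ is not small, so primes just above $P_0$ divide $\tau$ with non-negligible probability. Your treatment of prime powers is also wrong. $\Cok(B)_p\cong\Z/p^2$ has corank one modulo $p$, so a rank bound with $j\ge2$ does not see it.

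The paper avoids exponential moments. For $N^K<p\le e^{\delta N}$, a union bound over the single-prime estimate $Cp^{-2}+Ce^{-cN}$ gives $C_KN^{-K}+Ce^{-c'N}$, so with high probability no such prime divides $\tau$ at all. For $p\le N^K$ it bounds fixed-order moments $\E(\log\tau_{\le N^K})^k\le C_{k,K}(\log N)^k$. Each term of $(\sum_p\nu_p\log p)^k$ involves at most $k$ distinct primes, so only $k$-fold joint divisibility of prime powers is needed. This comes from Maples's comparison applied to a square matrix $A$ with $\tau\mid\det A$. There are only polynomially many terms to absorb the exponentially small errors, and Markov then gives $C(\log N/N)^k$. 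The $N^{-99}$ loss comes from these polynomial bounds and from the large-prime step, not from the comparison error, which is exponentially small.

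Incidentally, $\E|\Cok(B)_p|^k$ is infinite for $k\ge2$ in the corank-one Cohen--Lenstra limit, since $\mathbb P\{\Cok(B)_p\cong\Z/p^j\}\asymp p^{-2j}$. Only $k=1$ is available, which happens to suffice at fixed $p$.

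\emph{Huge primes.} Here the union bound is not set up correctly, and the counting fails.
\begin{itemize}
\item The primes $p>e^{cN}$ dividing $\tau$ are determined by the same columns you test. Knowing there are few of them does not license a union bound. The paper fixes the candidates in advance by taking the second column $X_j$ that fails to raise the rank modulo $p$. Then $p$ divides a fixed nonzero minor of $X_1,\ldots,X_{j-1}$, leaving at most $C_\delta\log N$ candidates that are independent of $X_j$.
\item More seriously, structured normal vectors modulo a huge prime are not few. The progression generators contribute $p^d$ choices and the $t$ exceptional coordinates $p^t$. When $p>e^{cN}$ these factors swamp the gain $\rho^m$, and $p$ itself ranges over an unbounded set.
\end{itemize}
The missing idea is a transfer to a fixed prime. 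Write a structured normal as $a=Tg$, with $T$ an integer matrix having at most $C_K+t$ columns and entries at most $C_KN^K$. All minors of $T$ and $U^\tran T$ are smaller than $\min\{p,p_0\}$, so the rank drop $\rank(U^\tran T)<\rank T$ over $\mathbb F_p$ passes to a fixed prime $p_0$ with $\log p_0\approx\sqrt N$. At $p_0$ the count with $t=\lceil2N^{1/4}\rceil$ is affordable. Even then the bound for each of the last $t+1$ columns is only $N^{-K}$, because only normals with $\rho_p\ge N^{-K}$ are excluded, not those with $\rho_p\ge e^{-cN}$. Your claimed $e^{-cN}$ bound in the unstructured case is therefore unsupported.
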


When $\tau$ is finite, write $\tau=\prod_p p^{\nu_p}$. On the event $\tau=\infty$, define $\nu_p=0$ for every prime $p$. For $x\ge2$, define
\[
\tau_{\le x}=\prod_{p\le x}p^{\nu_p},\qquad
\log\tau_{\le x}=\sum_{p\le x}\nu_p\log p.
\]
For finite $\tau$, $p\mid\tau$ if and only if $\rank_{\mathbb F_p}B<N$. We write $v_p$ for the exponent of $p$ in an integer, with $v_p(0)=\infty$. By Hadamard's inequality,
\begin{equation}\label{eq:valuation-cap}
\nu_p\le D_p:=\frac{CN\log N}{\log p}.
\end{equation}

\begin{lemma}[Moments for small primes]\label{lem:small-primes}
For every fixed positive integer $k$ and every fixed $K>0$,
\begin{equation}\label{eq:moment}
\E\bigl(\log\tau_{\le N^K}\bigr)^k
\le C_{k,K}(\log N)^k.
\end{equation}
\end{lemma}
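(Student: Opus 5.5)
We will bound the $k$-th moment of $X:=\log\tau_{\le N^K}=\sum_{p\le N^K}\nu_p\log p$ by expanding it as a multiple sum over primes. By Minkowski's inequality in $L^k$,
\[
\bigl(\E X^k\bigr)^{1/k}\le\sum_{p\le N^K}\log p\,\bigl(\E\nu_p^k\bigr)^{1/k},
\]
but this is too lossy, since there are about $N^K/\log N$ primes. Instead we split into \emph{small} primes $p\le P_0$, with $P_0$ a large constant, and \emph{medium} primes $P_0<p\le N^K$. For each fixed prime we aim for a bound of the form
\begin{equation}\label{eq:plan-tail}
\mathbb P\{\nu_p\ge j\}\le C\,p^{-j}\qquad (j\ge1),
\end{equation}
uniformly in $p\le N^K$ and $N$ large, possibly with an additive error $e^{-cN}$ that is absorbed using the cap $\nu_p\le D_p$ from~\eqref{eq:valuation-cap}. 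Here $\nu_p\ge j$ forces $\dim_{\mathbb F_p}\ker(B^{\tran}\bmod p)\ge1$, and more precisely forces the $p$-part of the cokernel to have order at least $p^j$. For the $N\times(N+1)$ Cohen--Lenstra-type limit with one extra column, the probability that the $p$-Sylow subgroup has order $p^j$ is of order $p^{-j}\cdot p^{-j}$ up to constants, so~\eqref{eq:plan-tail} has room to spare.

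To obtain~\eqref{eq:plan-tail}, we would first handle $\rank_{\mathbb F_p}B\le N-s$ via Nguyen--Wood's rank estimates modulo a prime~\cite[Sections~6--7]{NW}, which give probability $\le Cp^{-s(s+1)}+e^{-cN}$ uniformly for $p\le e^{cN}$. For fixed small $p$, Maples's comparison theorem~\cite[Theorem~1.2]{Maples} then compares the distribution of the $p$-Sylow subgroup of $\Cok(B)$ with the Cohen--Lenstra-type distribution up to $e^{-cN}$ error. This yields $\E\nu_p^k\le C_kp^{-1}$, since the Cohen--Lenstra weights decay like $p^{-j(j+1)}$ in the rank and like $p^{-j}$ in the order.

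For medium primes we use $\nu_p\le D_p$ together with $\mathbb P\{\nu_p\ge1\}=\mathbb P\{\rank_{\mathbb F_p}B<N\}\le C/p^{2}+e^{-cN}$. This is the Nguyen--Wood corank-one estimate for an $N\times(N+1)$ matrix, where two independent conditions must fail. The $e^{-cN}$ term multiplied by $D_p^k\log^kp\le (CN\log N)^k$, summed over $N^K$ primes, is negligible. We also need higher valuations: $\mathbb P\{\nu_p\ge j\}$ for $j\ge2$ with corank one requires working modulo $p^j$. Here we would bound the event via Maples-type moment comparison (the number of surjections $\Cok(B)\to\Z/p^j$), whose expectation for one extra column is $\approx p^{-j}$, uniformly for $p\le N^K$ since moment comparisons hold for moduli up to $e^{cN}$.

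Now expand
\[
\E X^k=\sum_{p_1,\dots,p_k}\prod_i\log p_i\;\E\prod_i\nu_{p_i},
\]
and group by the set of distinct primes. For distinct primes $q_1,\dots,q_m$ we need a joint bound
\[
\E\prod_i\nu_{q_i}^{e_i}\le C\prod_i q_i^{-1-\delta},
\]
that is, approximate independence across primes. This is supplied by moment comparison for the composite modulus $\prod q_i^{j_i}$ (Maples/Wood moments factor over primes), again valid while the modulus is at most $e^{cN}$, which holds since $m\le k$ and $q_i\le N^K$. Then
\[
\E X^k\le C_k\Bigl(\sum_p\frac{\log p}{p^{1+\delta}}\Bigr)^k+\text{negligible}=O_{k,K}(1),
\]
which is even better than $(\log N)^k$. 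The extra slack $(\log N)^k$ absorbs the logarithmic losses coming from the $D_p$ cap and from using the weaker bound $p^{-1}$ in place of $p^{-1-\delta}$, since $\sum_{p\le N^K}\log p/p\asymp K\log N$.

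\textbf{Main obstacle.} The hard part is the uniform joint tail estimate for medium primes up to $N^K$, i.e.\ getting polynomial-in-$p$ decay for $\nu_p\ge1$ with constants uniform in $p$. Fixed-prime comparison theorems do not give this directly. We would first try the Nguyen--Wood argument, in which Littlewood--Offord inverse theory rules out structured normal vectors modulo $p$ for $p$ polynomial in $N$, and verify that it yields $\mathbb P\{\rank_{\mathbb F_p}B<N\}\le C/p+e^{-cN}$ uniformly. Even the weaker $C/p$ bound, combined with the crude cap $\nu_p\le D_p$, suffices: the resulting $k$-fold sums are bounded by $(\sum_{p\le N^K}\log p/p)^k\,(\log N)^{O(1)}$-type expressions. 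If the cap loses too much, we would instead handle the higher valuations through the surjection moments above.
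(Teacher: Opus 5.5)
Your outer structure is the paper's: expand $\E(\log\tau_{\le N^K})^k$, group equal primes, bound joint divisibility for at most $k$ distinct primes by a comparison theorem at a composite modulus, and finish with $\sum_{p\le x}(\log p)^m/p\le C_m(\log x)^m$. The gap is in the estimate everything rests on. You need $\mathbb P\{\nu_{p_i}\ge j_i\ \forall i\}\le C\prod_i p_i^{-j_i}+Ce^{-cN}$, with decay geometric in every $j_i$, uniformly for $p_i\le N^K$ and $j_i\le D_{p_i}$.

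Your fallback claims that $\mathbb P\{\nu_p\ge1\}\le C/p$ plus the cap $\nu_p\le D_p$ suffices. This is false. Since $D_p\log p=CN\log N$, it only gives $\E(\nu_p\log p)^k\le(CN\log N)^k\,\mathbb P\{\nu_p\ge1\}$. Summing over $P_0<p\le N^K$ then yields order $(N\log N)^k\log\log N$, and still order $(N\log N)^k$ with $C/p^2$, not $(\log N)^{O(1)}$. The cap can only absorb exponentially small probabilities. Rank estimates modulo $p$ concern only $j=1$, so they cannot replace the higher-valuation bound. The uniformity in $p$ that you list as the main obstacle is exactly what the paper takes from Maples's comparison theorem, whose constants it uses uniformly in the primes and exponents.

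Your main-line mechanism also needs repair, on two points.
\begin{itemize}
\item Surjections onto $\Z/p^j$ do not detect $\nu_p\ge j$, since the $p$-part may be $(\Z/p)^j$. You would need $\sum_{|H|=p^j}\#\operatorname{Sur}(\Cok(B),H)/|\operatorname{Aut}H|$ instead.
\item The modulus $\prod_i q_i^{j_i}$ is not at most $e^{cN}$ merely because $q_i\le N^K$. Since $j_i$ runs up to $D_{q_i}$, $q_i^{j_i}$ can reach $e^{CN\log N}$. You must either truncate at $q_i^{j_i}\approx e^{cN/k}$ and use monotonicity in $j_i$, or have per-class errors uniform in the exponents.
\end{itemize}

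The paper obtains the joint bound more cheaply. It appends an independent uniform row to get a square matrix $A$ with $\tau\mid\det A$. Each event $v_{p_i}(\det A)<j_i$ is a union of cokernel classes of $A$ modulo $p_i^{j_i}$. So inclusion--exclusion involves at most $\exp(C_s\sqrt{N\log N})$ classes, each compared with a uniform model. In the Haar model, the Chinese remainder theorem gives independence across primes. The decomposition $v_p(\det Y)\overset{d}{=}Z_1+\cdots+Z_{N+1}$ with $\mathbb P\{Z_i\ge j\}=p^{-ij}$ then gives $\mathbb P\{v_p(\det Y)\ge j\}\le2p^{-j}$.

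Discarding the extra column costs a factor $p^{-1}$ in place of roughly $p^{-2}$. That is why the paper gets $(\log N)^k$ rather than your hoped-for $O(1)$. In exchange, it needs only a square-matrix comparison and an explicit computation, not uniform moment comparisons for $N\times(N+1)$ matrices.
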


\begin{proof}
\emph{Step 1. Joint divisibility.}
For fixed $s$, distinct primes $p_1,\ldots,p_s$, and integers $1\le j_i\le D_{p_i}$, we first show that
\begin{equation}\label{eq:joint-divisibility}
\mathbb P\{\nu_{p_i}\ge j_i\text{ for all }i\}
\le2^s\prod_{i=1}^s p_i^{-j_i}+C_s e^{-c_sN}.
\end{equation}
Append an independent uniform $0/1$ row to $B$ to obtain a square matrix $A$. When $\tau<\infty$, its order divides every maximal minor of $B$, so expansion along the new row gives $\tau\mid\det A$. Thus joint divisibility of $\tau$ implies the corresponding divisibility of $\det A$, and it suffices to bound the latter event, including the case $\det A=0$.

For a modulus that is a product of at most $s$ of the prime powers $p_i^{j_i}$, compare $A$ with a matrix having independent uniform entries modulo that modulus. Maples's comparison theorem~\cite[Theorem~1.2]{Maples} bounds the difference between the probabilities of each cokernel isomorphism class by $C_se^{-c_sN}$. The constants are uniform in the primes and exponents.

Apply inclusion--exclusion to the complementary events $v_{p_i}(\det A)<j_i$ to estimate joint divisibility. Each such event is equivalent to the cokernel modulo $p_i^{j_i}$ having order less than $p_i^{j_i}$. Finite abelian groups of order $p_i^\ell$ are indexed by partitions of $\ell$. The partition bound and~\eqref{eq:valuation-cap} therefore bound the number of isomorphism classes in each inclusion--exclusion term by
\[
\prod_{i=1}^s\sum_{0\le\ell<j_i}\exp(C\sqrt\ell)
\le\exp\bigl(C_s\sqrt{N\log N}\bigr).
\]
Since $s$ is fixed, summing the comparison errors over these classes and the at most $2^s$ terms still gives $C_se^{-c_sN}$.

For the uniform reference matrix, the Chinese remainder theorem makes the prime components independent. If $Y$ is a square matrix of order $N+1$ with independent entries distributed according to Haar measure on $\Z_p$, elimination gives
\[
v_p(\det Y)\ \overset{d}=\ Z_1+\cdots+Z_{N+1},
\qquad \mathbb P\{Z_i\ge j\}=p^{-ij},
\]
with independent $Z_i$. Indeed, choosing a pivot of minimum valuation in the first column and eliminating that column leaves a smaller matrix with independent entries distributed according to Haar measure, to which the same argument applies. To bound the tail of the sum, observe that
\[
\E p^{Z_2+\cdots+Z_{N+1}}
=\prod_{i=2}^{N+1}\frac{1-p^{-i}}{1-p^{1-i}}
=\frac{1-p^{-(N+1)}}{1-p^{-1}}\le2.
\]
Using the geometric tail of $Z_1$ and conditioning on $Z_2+\cdots+Z_{N+1}$ gives
\[
\mathbb P\{v_p(\det Y)\ge j\}\le2p^{-j}.
\]
Multiplying these bounds over the independent prime components and adding the comparison error proves~\eqref{eq:joint-divisibility}.

\smallskip
\emph{Step 2. The moment bound.}
Expand the moment in~\eqref{eq:moment}, grouping equal primes. For positive integers $m_1+\cdots+m_s=k$, the identity
\[
\nu^m=\sum_{j\ge1}\bigl(j^m-(j-1)^m\bigr)\mathbf 1_{\{\nu\ge j\}}
\]
and~\eqref{eq:joint-divisibility} yield
\begin{equation}\label{eq:mixed-moment}
\E\prod_{i=1}^s\nu_{p_i}^{m_i}
\le C_k\prod_{i=1}^s p_i^{-1}
+C_k(N\log N)^k e^{-c_kN}.
\end{equation}
Here we used $\sum_{j\ge1}(j^m-(j-1)^m)p^{-j}\le C_m/p$. The elementary prime estimate
\[
\sum_{p\le x}\frac{(\log p)^m}{p}\le C_m(\log x)^m,
\]
which follows by partial summation from $\sum_{p\le x}\log p\le Cx$, bounds the sum of the main terms by $C_{k,K}(\log N)^k$. Since $k,K$ are fixed, the error terms in~\eqref{eq:mixed-moment}, including their logarithmic factors, have a total contribution that is still exponentially small. Combining these two estimates proves~\eqref{eq:moment}.
\end{proof}

\begin{lemma}[Intermediate prime factors]\label{lem:medium-primes}
There is an absolute $\delta>0$ such that, for every fixed $K>0$ and all sufficiently large $N$, with probability at least $1-C_KN^{-K}-Ce^{-cN}$, the matrix $B$ has full row rank over every $\mathbb F_p$ with $N^K<p\le e^{\delta N}$.
\end{lemma}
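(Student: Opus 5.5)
We plan to follow the strategy the introduction attributes to this step, namely Nguyen--Wood's rank estimate modulo a prime~\cite[Sections~6--7]{NW} together with inverse Littlewood--Offord theory, adapted to a rectangular $N\times(N+1)$ matrix and to primes in the window $N^K<p\le e^{\delta N}$.

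\emph{Reduction to the last column.} Reveal the columns of $B$ one at a time and work over $\mathbb F_p$. Let $B'$ be the $N\times N$ matrix formed by the first $N$ columns. If $\rank_{\mathbb F_p}B<N$, then $B'$ is already singular mod $p$. Moreover, the last column $x$ then lies in the column space of $B'$. We choose a nonzero vector $w\in\mathbb F_p^N$ with $w^{\tran}B'=0$, which is possible since $B'$ is singular. It follows that $w^{\tran}x=0$.

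\emph{Case of $\rank_{\mathbb F_p}B'\le N-2$.} This case should be handled by the corank estimates of~\cite{NW}. These give probability at most $Cp^{-2}+Ce^{-cN}$ uniformly for $p\le e^{\delta N}$, and summing over $p>N^K$ yields $O(N^{-K})+e^{-cN}$ after the prime count is restricted.

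\emph{Case of corank exactly one.} Here $w$ is unique up to scaling and is determined by $B'$. Conditional on $B'$, the event in question is $w^{\tran}x=0$ for the independent $0/1$ vector $x$, whose probability is the concentration $\rho_p(w)=\sup_r\mathbb P\{w^{\tran}x=r\}$. Following~\cite{NW}, we split normal vectors by their concentration.
\begin{itemize}
\item If $\rho_p(w)\le p^{-1}+e^{-cN}$, the conditional probability is about $1/p$. Multiplying by $\mathbb P\{B'\text{ singular mod }p\}\le C/p+e^{-cN}$ gives $C/p^2$ per prime, again summable to $O(N^{-K})$.
\item If $\rho_p(w)$ is larger, inverse Littlewood--Offord theory in $\mathbb F_p$ (the form in~\cite[Section~7]{NW}, valid for $\rho\ge e^{-cN}$, which forces $p\le e^{\delta N}$) places $w$ in a structured set, essentially a generalized arithmetic progression of small rank and controlled volume. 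A union bound over structured $w$ then shows that some such $w$ is orthogonal to all $N$ rows of $B'$ with probability $e^{-cN}$, uniformly in $p$.
\end{itemize}

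\emph{Controlling the number of primes.} We cannot multiply the exponential error by the number of primes up to $e^{\delta N}$. Instead we would use the structure bound of~\cite{NW} in its simultaneous form, a single event over all $p$ in the range, or else absorb it by taking $\delta$ small relative to $c$. The per-prime main terms sum as $\sum_{p>N^K}p^{-2}\le N^{-K}$.

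\emph{Main obstacle.} The hardest step will be this uniformity in $p$ of the structured-vector bound, so that the exponentially many primes do not destroy the $e^{-cN}$ error. We would first try to apply the union bound over pairs $(p,w)$ with $w$ structured, counting those pairs jointly. Since $\delta$ is ours to choose, it can be taken small enough that $e^{\delta N}$ times the per-prime bound remains $e^{-cN/2}$.
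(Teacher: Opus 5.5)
Your proposal ends up with the same argument as the paper. You get a per-prime bound $\mathbb P\{\rank_{\mathbb F_p}B<N\}\le Cp^{-2}+Ce^{-cN}$ from Nguyen--Wood, then take a union bound over $N^K<p\le e^{\delta N}$ with $\delta<c/2$, which gives $C\sum_{p>N^K}p^{-2}+Ce^{\delta N-cN}\le C_KN^{-K}+Ce^{-cN/2}$.

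The paper obtains the per-prime bound in one step, by applying Nguyen--Wood's Proposition~2.2 directly to the $N\times(N+1)$ matrix with one excess column. Your reconstruction through the square block $B'$ and the structured/unstructured split of normal vectors is therefore unnecessary. The ``main obstacle'' you flag is also not one: $c$ does not depend on $\delta$, so multiplying the exponential error by the at most $e^{\delta N}$ primes is exactly what works.
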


\begin{proof}
Applying Nguyen--Wood's rank estimate~\cite[Proposition~2.2]{NW} with one excess column gives
\begin{equation}\label{eq:medium-prime}
\mathbb P\{\rank_{\mathbb F_p}B<N\}\le Cp^{-2}+Ce^{-cN}.
\end{equation}
Fix $0<\delta<c/2$. A union bound gives
\[
\mathbb P\{\rank_{\mathbb F_p}B<N\text{ for some }N^K<p\le e^{\delta N}\}
\le C\sum_{p>N^K}p^{-2}+Ce^{\delta N-cN}
\le C_KN^{-K}+Ce^{-c'N}.\qedhere
\]
\end{proof}

\subsubsection{Large prime factors}\label{sec:large-primes}
\begin{lemma}[Large primes]\label{lem:large-primes}
For every fixed $\delta>0$ and all sufficiently large $N$, with probability at least $1-C_\delta N^{-99}$, the matrix $B$ has full row rank over every $\mathbb F_p$ with $p>e^{\delta N}$.
\end{lemma}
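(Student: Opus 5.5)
Since $\tau$ divides every maximal minor of $B$, Hadamard's inequality gives $\tau\le N^{CN}$ whenever $\tau<\infty$. Hence at most $CN\log N/(\delta N)=C_\delta\log N$ primes $p>e^{\delta N}$ can divide $\tau$, and each appears with bounded multiplicity. The event to control is therefore: $B$ has full rank over $\Q$, but $\rank_{\mathbb F_p}B<N$ for some large prime $p$.

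The plan is to use the standard inverse Littlewood--Offord reduction for large primes, in the spirit of Nguyen--Wood's treatment of the regime $p>e^{\delta N}$. If $\rank_{\mathbb F_p}B<N$, there is a nonzero $x\in\mathbb F_p^N$ with $x^{\tran}B=0$, so every column $b_j$ of $B$ satisfies $\langle x,b_j\rangle=0$. Split according to the concentration $\rho_p(x)=\sup_t\mathbb P\{\langle x,\xi\rangle=t\}$, where $\xi$ is a uniform $0/1$ vector.

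The low-concentration case is $\rho_p(x)\le e^{-\delta' N}$ with $\delta'$ small. I would handle it as in the proof of Lemma~\ref{lem:subspace-concentration}. Expose the first $N$ columns; on their failure to span, the common kernel is essentially determined. The extra $(N+1)$-st column then hits a fixed $x$ with probability at most $\rho_p(x)$. To cope with a possibly larger kernel, condition on the corank, using that corank at least $2$ over $\mathbb F_p$ has probability $O(p^{-1})+e^{-cN}$ by~\cite[Proposition~2.2]{NW}.

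The high-concentration case is $\rho_p(x)>e^{-\delta' N}$. By the inverse Littlewood--Offord theorem over $\mathbb F_p$ (Nguyen--Vu, or the Halász-type structure used in~\cite[Sections~6--7]{NW}), most coordinates of $x$ lie in a generalized arithmetic progression of bounded rank and size $O(\rho_p(x)^{-1}N^{-1/2})$. Since $p>e^{\delta N}$ is much larger than this size once $\delta'<\delta$, such progressions lift to integer progressions without wraparound. Counting the structured $x$ dyadically in $\rho$, and multiplying by $\rho^{N+1}$ for the event that all columns are orthogonal, gives a total bound of $e^{-cN}$. This is the usual Tao--Vu/Nguyen--Wood counting, and it is uniform in $p$.

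The main obstacle is that $p$ ranges over infinitely many values, so a union bound over primes is impossible. The fix is to fix $B$ and observe that only $O(\log N)$ large primes can divide $\tau$. Rather than summing over $p$, I would work over $\Z$. A structured $x$ modulo $p$ with small progression lifts to an integer vector $y$ with $y^{\tran}B\equiv0\pmod p$ and entries bounded by $e^{\delta' N}\ll p$. Each entry of $y^{\tran}B$ is a sum of at most $N$ terms of size at most $e^{\delta' N}$, hence has absolute value less than $p$, so the congruence forces the exact equation $y^{\tran}B=0$ over $\Z$. This event is independent of $p$, and its probability is bounded by the singularity-type estimate for the $(N+1)$ columns, which is $e^{-cN}$, using KKS-type subspace bounds with Lemma~\ref{lem:subspace-concentration}.

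For the unstructured case, sum over $p$ using the weight $\mathbf 1_{p\mid\tau}\log p/\log\tau$. Equivalently, bound $\E\sum_{p>e^{\delta N}}\mathbf 1_{p\mid\tau}$ via the first-moment identity $\sum_{p\mid\tau}\log p\le\log\tau\le CN\log N$. Then each prime contributes at most $\rho\le e^{-\delta'N}$ times a counting factor that decays in $p$. I expect the uniformity in $p$ in this unstructured step to be the delicate point. If a direct argument fails, I would fall back on $\E\log\tau_{>e^{\delta N}}$ bounds combined with the rank-$N$ integer argument. Either route yields a failure probability of $e^{-cN}$, which is well below $C_\delta N^{-99}$.
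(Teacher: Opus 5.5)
You leave the hardest part of the lemma open: uniformity over the unboundedly many primes $p>e^{\delta N}$ outside the structured case.

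\textbf{The missing idea.} Your remark that $\tau$ has only $O_\delta(\log N)$ prime factors above $e^{\delta N}$ cannot be used as stated, because $\tau$ depends on all of $B$. To bound the chance that a column is orthogonal to a kernel vector, the candidate primes must be fixed before that column is exposed. Taking the primes dividing the determinant of the first $N$ columns handles corank one. If those columns already have corank at least two modulo $p$, however, $B$ is deficient whatever $X_{N+1}$ is. The Nguyen--Wood estimate you invoke there is a bound for one fixed prime with an additive $e^{-cN}$, and it cannot be summed over the primes in $(e^{\delta N},N^{CN}]$. The identity $\sum_{p\mid\tau}\log p\le\log\tau$ only shows there are $O_\delta(\log N)$ large prime divisors, not that there are none with high probability.

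The paper's device is the second failure index. If $\rank_{\mathbb F_p}B<N$, at least two columns fail to raise the rank modulo $p$; let $j$ be the second. The first $j-1$ columns, independent over $\Q$ with probability $1-Ce^{-cN}$, have rank $j-2$ modulo $p$. So $p$ divides a nonzero minor of order $j-1$ chosen by a fixed rule from $X_1,\ldots,X_{j-1}$. This leaves $O_\delta(\log N)$ candidate primes that are independent of $X_j$. One then bounds the probability that $X_j$ lies in the span of the preceding columns modulo each candidate: by $2^{-(N-j+2)}$ for $j\le N-t$, and by Lemma~\ref{lem:new-column} for the last $t+1$ columns.

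In that lemma the normal vectors annihilate only the first $N-t$ columns, which have a nontrivial rational left kernel, so lifting to $\Q$ gives no contradiction. The paper instead transfers the structure to a fixed prime $p_0$ with $\log p_0\approx\sqrt N$ and counts there. Counting over $\mathbb F_p$ itself, as you suggest, involves $p^{t+d}$ choices of generators and exceptional values, so it is not uniform in $p$.

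\textbf{The concentration threshold.} Bounded-rank inverse theorems, such as the Nguyen--Wood Theorem~7.3 used in the paper, need $\rho_p(x)\ge N^{-D}$ with $D$ fixed. For $e^{-\delta'N}<\rho_p(x)<N^{-D}$ the rank need not be bounded. So your split must be made at a polynomial level, and then the unstructured case yields only $N^{-K}$ per column and candidate prime. That is enough for $N^{-99}$, but your claimed $e^{-cN}$ is unsupported.

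\textbf{The lift to $\Q$.} A vector whose coordinates lie in a progression of rank at least two with arbitrary generators in $\mathbb F_p$, plus arbitrary exceptional residues, does not lift to an integer vector with small entries. What is true is a rank transfer. Write $x=Tg$ with $T$ integral, with small entries and at most $C+t$ columns. Then $B^{\tran}Tg\equiv0$ and $Tg\not\equiv0$ give $\rank_{\mathbb F_p}(B^{\tran}T)<\rank_{\mathbb F_p}T$. All minors are below $p$ since $t\log N=o(N)$, so the inequality holds over $\Q$. This produces a nonzero rational left null vector of $B$, generally not a lift of $x$.

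Corrected this way, your lift shows that when the first $N$ columns are nonsingular over $\Q$, no structured vector modulo $p$ annihilates them. Combined with the second failure index, this would even replace the counting at $p_0$. The left kernel of $B$ modulo $p$ is nonzero, consists of vectors with $\rho_p\le N^{-K}$ by the lift, and is contained in the left kernel of $X_1,\ldots,X_{j-1}$. Hence the latter contains such a vector, chosen by a rule depending on $X_1,\ldots,X_{j-1}$ alone. Without the second failure index, however, the proposal does not prove the lemma.
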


Set
\begin{equation}\label{eq:prime-parameters}
t=\lceil2N^{1/4}\rceil,\qquad m=N-t,\qquad
\log p_0=\sqrt N+O(1),
\end{equation}
where $p_0$ is a prime, chosen by Bertrand's postulate. These parameters satisfy
\[
t\log N=o(\log p_0),\qquad t\log p_0=o(N).
\]
For $a\in\mathbb F_p^N$, put
\[
\rho_p(a)=\max_{z\in\mathbb F_p}\mathbb P\{X^{\tran}a=z\},
\qquad X\text{ uniform on }\{0,1\}^N.
\]

\begin{lemma}[A new column]\label{lem:new-column}
Fix $K,\delta>0$, and let $U$ consist of the first $m$ columns of $B$. With probability at least $1-C_{K,\delta}e^{-c_{K,\delta}N}$ over $U$, every prime $p>e^{\delta N}$ and every proper subspace $V\subset\mathbb F_p^N$ containing the columns of $U\bmod p$ satisfy
\begin{equation}\label{eq:new-column}
\mathbb P\{X\bmod p\in V\}\le N^{-K},
\end{equation}
where $X$ is an independent uniform $0/1$ column.
\end{lemma}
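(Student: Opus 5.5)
The plan is to reduce \eqref{eq:new-column} to the absence of a structured, highly concentrated normal vector, and then to rule out such vectors by an inverse Littlewood--Offord counting argument whose count does not depend on $p$. If $V\subsetneq\mathbb F_p^N$ contains the columns of $U\bmod p$, pick a nonzero $a\in V^\perp$. Then $U^{\tran}a=0$ in $\mathbb F_p$ and $\mathbb P\{X\bmod p\in V\}\le\mathbb P\{X^{\tran}a=0\}\le\rho_p(a)$. So it suffices to show the following: with probability at least $1-C_{K,\delta}e^{-c_{K,\delta}N}$ over $U$, there is no prime $p>e^{\delta N}$ and no nonzero $a\in\mathbb F_p^N$ with $U^{\tran}a=0$ and $\rho_p(a)>N^{-K}$.

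\emph{Step 1: structure.} Apply the optimal inverse Littlewood--Offord theorem over $\mathbb F_p$ (Nguyen--Vu), with $N'=N^{1/2+\epsilon}$ exceptional coordinates allowed. If $\rho_p(a)\ge\rho>N^{-K}$, then all but $N'$ coordinates of $a$ lie in a symmetric proper generalized arithmetic progression
\[
Q=\Bigl\{\textstyle\sum_{j\le r}x_jg_j:\ |x_j|\le L_j\Bigr\},\qquad r=O_K(1),\qquad |Q|\le C_K\,\rho^{-1}N'^{-r/2}.
\]
Because $p>e^{\delta N}$ is far larger than $|Q|\le N^{O(K)}$, I will arrange, after a small dilation or Freiman isomorphism step, that $Q$ is the injective image of an integer box. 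Then $a_i=\sum_jx_{ij}g_j$ for $i$ outside the exceptional set $I$, with integer coefficients $|x_{ij}|\le N^{O(K)}$.

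\emph{Step 2: removing the dependence on $p$.} This is the step I expect to be the main obstacle, because a naive union bound pays a factor $p^{r}$ for the generators $g_j$, and $p$ is unbounded. My first attempt is to linearize. Split the coordinates into $I^c$, which carry the integer coefficient matrix $\mathbf X=(x_{ij})$, and $I$, which carry free entries $y\in\mathbb F_p^{I}$. The condition $U^{\tran}a=0$ becomes
\[
W\binom{g}{y}=0\quad\text{in }\mathbb F_p,\qquad W=\bigl(U_{I^c}^{\tran}\mathbf X,\ U_I^{\tran}\bigr),
\]
where $W$ is an $m\times(r+|I|)$ integer matrix with entries bounded by $N^{O(K)}$. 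Since $a\ne0$, the vector $(g,y)$ is a nonzero kernel vector mod $p$. Every minor of $W$ has size at most $N'+r$, so it is bounded by $\bigl(N^{O(K)}(N'+r)\bigr)^{N'+r}=e^{O(N^{1/2+\epsilon}\log N)}<p$. Hence rank deficiency mod $p$ is the same as rank deficiency over $\mathbb Q$. This produces a nonzero rational, hence integer, vector $\tilde a$ with $U^{\tran}\tilde a=0$ over $\mathbb Z$, built from the same coefficient matrix $\mathbf X$ and exceptional set $I$. The conclusion is that the bad event is contained in a union, indexed only by $(I,\mathbf X)$ and independent of $p$, of events of the form ``$U^{\tran}$ annihilates a nonzero integer vector in the subspace $\{\mathbf Xg\}\oplus\mathbb R^{I}$'' whose concentration is still $\gtrsim\rho$. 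Recovering that concentration for $\tilde a$ needs care: it should follow because the non-exceptional part of $\tilde a$ is $\mathbf X\tilde g$, which lies in the same integer GAP shape.

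\emph{Step 3: the count.} Dyadically decompose $\rho\in[N^{-K},1]$. For each level, apply Lemma~\ref{lem:subspace-concentration} with $d\le r+N'+1$ to the $m$ rows of $U^{\tran}$; equivalently, use the rows' independence and the bound $\rho^{m-d+1}$. Then sum over the at most $\binom N{N'}|Q|^{N}$ choices of $(I,\mathbf X)$. The total is
\[
\binom{N}{N'}\bigl(C_K\rho^{-1}N'^{-r/2}\bigr)^{N}\binom{m}{N'+r}\rho^{\,m-N'-r}
\le \rho^{-t-2N'}\,N^{-(1/4)rN+o(N)}.
\]
Since $t=O(N^{1/4})$, $N'=o(N)$ and $\rho\ge N^{-K}$, this is $e^{-cN\log N}$ when $r\ge1$. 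Vectors of very high concentration, with $\rho$ close to $1$ or a bounded number of nonzero coordinates, must be treated separately: the case $r=0$ forces $a$ to be supported on the exceptional set, and for such sparse $a$ I will use the small-support count of Lemma~\ref{lem:large-coefficients}, which is directly exponentially small. Summing over the $O(K\log N)$ dyadic levels gives the claimed $1-C_{K,\delta}e^{-c_{K,\delta}N}$.
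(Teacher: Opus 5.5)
There is a genuine gap where your Step 2 feeds into Step 3. Your reduction, Step 1, and the small-minor transfer from $\mathbb F_p$ to $\mathbb Q$ are sound.

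\emph{Where it breaks.} Lemma~\ref{lem:subspace-concentration} needs a uniform bound $\mathbb P\{X^{\tran}b=0\}\le u$ for every $b$ in the set $S$ it is applied to, and to beat the $\rho^{-N}$ count of pairs $(I,\mathbf X)$ you need $u\approx\rho$. Let $T$ be the integer matrix with $T\binom{g}{y}=a$, i.e.\ $\mathbf X$ on the coordinates $I^c$ and the identity on $I$, so that $W=U^{\tran}T$. The column space of $T$ contains vectors of concentration $1/2$, for instance $e_i$ with $i\in I$. So you must restrict $S$ to vectors of concentration at most $2\rho$, and then you must know that your $\tilde a$ lies in this restricted set.

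An arbitrary nonzero rational kernel vector $(\tilde g,\tilde y)$ of $W$ gives no such information. Its entries are unrelated to $(g,y)$. Moreover, membership of $\mathbf X\tilde g$ in the progression shape goes the wrong way: structure is what large concentration implies, and it never caps $\mathbb P\{X^{\tran}\tilde a=0\}$ from above.

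There is also a nonvanishing problem. If $\mathbf X$ lacks full column rank over $\mathbb Q$ (e.g.\ a generator used by no coordinate), a nonzero $(\tilde g,\tilde y)$ can give $\tilde a=0$. The paper avoids this by comparing $\rank(U^{\tran}T)$ with $\rank T$, not merely using rank deficiency of $W$.

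\emph{The repair.} Both issues disappear if you take $\tilde a$ to be a lift of $a$.
\begin{itemize}
\item Since all minors of $W$ are below $p$, $\rank_{\mathbb Q}W=\rank_{\mathbb F_p}W$.
\item The lattice $\Lambda=\ker W\cap\mathbb Z^{r+|I|}$ is saturated, so $\Lambda\cap p\mathbb Z^{r+|I|}=p\Lambda$. Hence reduction embeds $\Lambda/p\Lambda$ into $\ker_{\mathbb F_p}W$, and by comparing dimensions it is onto.
\item Choose $(\tilde g,\tilde y)\in\Lambda$ reducing to $(g,y)$. Then $\tilde a=T\binom{\tilde g}{\tilde y}\equiv a\pmod p$. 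So $\tilde a\ne0$, $U^{\tran}\tilde a=0$ over $\mathbb Z$, and
$\mathbb P\{X^{\tran}\tilde a=0\}\le\mathbb P\{X^{\tran}a\equiv0\pmod p\}\le\rho_p(a)$.
\end{itemize}
Now apply Lemma~\ref{lem:subspace-concentration} to $\{b\in\operatorname{col}T\setminus\{0\}:\mathbb P\{X^{\tran}b=0\}\le2\rho_0\}$ on each dyadic level $[\rho_0,2\rho_0]$. This gives exactly your Step~3 arithmetic, uniformly in $p$. The sparse case $r=0$ is the same argument with $u=1/2$ and $d\le N'$. Lemma~\ref{lem:large-coefficients} is not the right citation there, since it only treats bounded supports.

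\emph{Comparison with the paper.} With this fix your route genuinely differs from the paper's. The paper transfers, via the rank comparison, to an auxiliary prime $p_0$ with $\log p_0\approx\sqrt N$. It thereby loses the concentration of the transferred vector $a'$, so it re-stratifies by $\rho_{p_0}(a')$:
\begin{itemize}
\item when $\rho_{p_0}(a')\le N^{-D}$, it uses the transferred structure of volume $C_KN^K$;
\item otherwise, it applies inverse Littlewood--Offord a second time at $p_0$, paying $p_0^{t+d}=e^{o(N)}$ to count generators.
\end{itemize}
Your lifted version needs neither $p_0$ nor the second inverse theorem, because the subspace lemma never counts generators.
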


\begin{proof}
\emph{Step 1. Additive structure.}
Nguyen--Wood's inverse Littlewood--Offord theorem~\cite[Theorem~7.3]{NW}, applied to the uniform $0/1$ distribution with $\varepsilon=1/2$, gives the following structural statement. The subsequent reduction and counting steps adapt their approach in Sections~6--7. For fixed $D>0$, if $p>C_DN^D$ and $\rho_p(a)\ge N^{-D}$, then all but $t$ coordinates of $a$ lie in a proper symmetric generalized arithmetic progression
\[
Q=\left\{\sum_{i=1}^d z_i g_i:|z_i|\le L_i,\ z_i\in\Z\right\},
\qquad d\le C_D,
\]
with
\begin{equation}\label{eq:gap-volume}
|Q|=\prod_{i=1}^d(2L_i+1)
\le\max\{1,C_D\rho_p(a)^{-1}t^{-d/2}\}.
\end{equation}
Remove the generators for which $L_i=0$. If $a$ has more than $t$ nonzero coordinates, then $Q\ne\{0\}$, and therefore $d\ge1$, $|Q|\ge3$, and
\begin{equation}\label{eq:gap-improvement}
|Q|\le C_D\rho_p(a)^{-1}t^{-1/2}.
\end{equation}

\smallskip
\emph{Step 2. Reduction to one prime.}
Suppose~\eqref{eq:new-column} fails for some $p>e^{\delta N}$. A nonzero normal $a$ to $V$ satisfies $U^{\tran}a=0$ and $\rho_p(a)>N^{-K}$. By~\eqref{eq:gap-volume}, we can write $a=Tg$, where $T$ is an integer matrix with $N$ rows, at most $C_K+t$ columns, and entries bounded by $C_KN^K$. The progression columns of $T$ record the integer coefficients $z_i$, whereas its remaining columns are coordinate vectors for the exceptional entries. The vector $g$ consists of the progression generators and those exceptional values. Since $Tg=a\ne0$ and $U^{\tran}Tg=0$,
\begin{equation}\label{eq:rank-transfer}
\rank_{\mathbb F_p}(U^{\tran}T)<\rank_{\mathbb F_p}T.
\end{equation}
All minors of $T$ and $U^{\tran}T$ have absolute value at most
\[
\exp(C_KN^{1/4}\log N)<\min\{p,p_0\}
\]
for sufficiently large $N$. Consequently, a minor is nonzero over $\Q$ if and only if it is nonzero modulo either prime, so both matrices have the same rank over $\Q$, $\mathbb F_p$, and $\mathbb F_{p_0}$. The rank inequality~\eqref{eq:rank-transfer} therefore gives a nonzero
\[
a'\in\im(T\bmod p_0),\qquad U^{\tran}a'=0\pmod{p_0}.
\]
All but $t$ coordinates of $a'$ lie in a symmetric progression of rank at most $C_K$ and volume at most $C_KN^K$. This progression need not be proper, but its defining box retains the same volume bound.

\smallskip
\emph{Step 3. Counting at the fixed prime.}
The transfer in Step~2 preserves additive structure but gives no lower bound on $\rho_{p_0}(a')$. We therefore count the transferred vectors according to their concentration. First, for bounded progression rank $d$ and a numerical volume bound $V\ge1$, the number of vectors with $t$ exceptional coordinates and all other coordinates in the progression is at most
\begin{equation}\label{eq:gap-count}
\binom Nt p_0^{t+d}V^{m+d}.
\end{equation}
Choose the exceptional positions in $\binom Nt$ ways. There are then at most $V^d$ choices of lengths, $p_0^d$ choices of generators, $p_0^t$ choices of exceptional values, and $V^m$ choices of the other entries. Vectors with fewer exceptions are included by enlarging their exceptional set to size $t$. For fixed $d$ and $V\le N^{O(1)}$, the choices of $t$ and $p_0$ give
\begin{equation}\label{eq:subexponential-count}
\binom Nt p_0^{t+d}V^d=\exp(o(N)).
\end{equation}
For each fixed nonzero $a'$, independence of the columns gives
\[
\mathbb P\{U^{\tran}a'=0\}\le\rho_{p_0}(a')^m.
\]
Nonzero vectors supported on at most $t$ coordinates contribute at most
\[
(t+1)N^t p_0^t2^{-m}\le e^{-cN}.
\]
Here $\rho_{p_0}(a')\le1/2$ for every nonzero $a'$, by conditioning on all but one coordinate with a nonzero coefficient.

Choose a fixed $D>K+2$. For the vectors $a'$ obtained in Step~2 with $\rho_{p_0}(a')\le N^{-D}$, equations~\eqref{eq:gap-count}--\eqref{eq:subexponential-count}, with $V=C_KN^K$, give total probability at most
\[
e^{C_KN}N^{Km}N^{-Dm}\le e^{-c_KN\log N}.
\]
For the remaining vectors, divide $N^{-D}<\rho_{p_0}(a')\le1/2$ into $O_D(\log N)$ dyadic intervals $[q,2q]$. Apply~\eqref{eq:gap-volume} again, now with exponent $D$. Vectors supported on at most $t$ coordinates have already been treated, so~\eqref{eq:gap-improvement} gives $V\le C_Dq^{-1}t^{-1/2}$. For each interval, equations~\eqref{eq:gap-count}--\eqref{eq:subexponential-count} give
\[
e^{C_DN}(q^{-1}t^{-1/2})^m(2q)^m
\le e^{C_D'N}t^{-m/2}\le e^{-c_DN\log N}.
\]
Summing over the dyadic intervals and adding the contributions from sparse vectors and from vectors with small concentration proves the lemma.
\end{proof}

\begin{proof}[Proof of Lemma~\ref{lem:large-primes}]
Write $X_1,\ldots,X_{N+1}$ for the columns of $B$. With probability at least $1-Ce^{-cN}$, the first $N$ columns are independent over $\Q$. This follows from~\cite[Theorem~1]{KKS} by normalizing the first row and column of a sign matrix of order $N+1$.

If $\rank_{\mathbb F_p}B<N$, at least two of its $N+1$ columns fail to increase the rank. If $j$ is the second such index, the preceding columns have rank $j-2$ modulo $p$. For each possible $j$, use a fixed rule to choose a nonzero minor $D$ of order $j-1$ from the preceding columns whenever they are independent over $\Q$. Every prime for which $j$ is the second failure makes the preceding columns have rank less than $j-1$ and therefore divides $D$. Hadamard's inequality gives $|D|\le N^{N/2}$ and therefore leaves at most $C_\delta\log N$ candidate primes larger than $e^{\delta N}$. Because this list depends only on $X_1,\ldots,X_{j-1}$, it is independent of $X_j$.

For $j\le m$, at a prime where the preceding rank is $j-2$, a vector in that span is determined by $j-2$ of its coordinates. Independence of the remaining coordinates gives
\[
\mathbb P\{X_j\text{ belongs to that span}\mid X_1,\ldots,X_{j-1}\}
\le2^{-(N-j+2)}.
\]
Summing over $j\le m$ and over the at most $C_\delta\log N$ possible primes for each $j$ gives at most $C_\delta(\log N)2^{-t}$. For $j>m$, the preceding columns include $U$. On the event in Lemma~\ref{lem:new-column}, their span is proper and contains $U\bmod p$, so that lemma bounds each conditional probability by $N^{-K}$. Summing over the $t+1$ remaining columns and the candidate primes, and adding the exceptional probabilities, gives
\begin{align}\label{eq:large-prime-sum}
&\mathbb P\{\rank_{\mathbb F_p}B<N\text{ for some prime }p>e^{\delta N}\}\notag\\
&\hspace{1cm}\le C_{K,\delta}e^{-c_{K,\delta}N}
+C_\delta\log N\bigl(2^{-t}+(t+1)N^{-K}\bigr).
\end{align}
Taking $K=102$ proves the lemma.
\end{proof}

\subsubsection{Completion of the proof}\label{sec:span-completion}
\begin{proof}[Proof of Proposition~\ref{prop:cokernel}]
Apply Lemma~\ref{lem:large-primes} with $\delta$ from Lemma~\ref{lem:medium-primes}. Outside a set of probability at most $CN^{-99}+C_KN^{-K}$, these lemmas give full row rank modulo every prime greater than $N^K$. It follows that
\[
\tau=\tau_{\le N^K}<\infty.
\]
Indeed, rank deficiency over $\Q$ would imply rank deficiency modulo every prime, and a prime factor of a finite $\tau$ would also force rank deficiency modulo that prime. It remains to control the size of $\tau_{\le N^K}$. Applying Markov's inequality to $\log\tau_{\le N^K}$ using Lemma~\ref{lem:small-primes}, and including the exceptional probability above, yields
\[
\mathbb P\{\tau>e^{bN}\}
\le CN^{-99}+C_KN^{-K}
+C_{k,K,b}\left(\frac{\log N}{N}\right)^k.
\]
Taking $k=K=102$ makes the right side $O_b(N^{-99})$ and proves the proposition.
\end{proof}

\begin{proof}[Proof of Theorem~\ref{thm:span}]
Change row and column signs so that the first row and column of $R$ are all~$1$. Conditional on their original values, the remaining entries are still independent uniform signs. The resulting matrix has the form
\[
\begin{pmatrix}
1&\mathbf 1^{\tran}\\
\mathbf 1&J-2B^{\tran}
\end{pmatrix},
\]
where $B$ has independent uniform $0/1$ entries and size $(n-2)\times(n-1)$, and $J$ and $\mathbf 1$ have all entries equal to one. Subtract the first row from every other row, and then subtract the first column from every other column. The result is $\diag(1,-2B^{\tran})$. These integral row and column operations preserve the greatest common divisor of maximal minors. The formulas relating Smith normal form to the cokernel and the greatest common divisors of minors~\cite[Theorems~2.3--2.4]{StanleySNF}, applied to $B^{\tran}$, give
\begin{equation}\label{eq:cokernel-identity}
\Delta(R)=|\Cok(B)|,\qquad
\Cok(B)=\Z^{n-2}/B\Z^{n-1}.
\end{equation}
Since $\rank R=1+\rank B$, the identity also holds with both sides set to infinity when the ranks are deficient.

Apply Proposition~\ref{prop:cokernel} to $B$ with $N=n-2$ and $b=h$. By~\eqref{eq:cokernel-identity} and $e^{hn}\ge e^{hN}$, this bounds the event on the right side of~\eqref{eq:reduction} and proves the theorem for $r=n-1$. For $r<n-1$, append independent sign vectors to obtain $n-1$ vectors. On the event that the full list is linearly independent and its span meets the cube only in its members and their negatives, linear independence excludes the appended vectors and their negatives from the span of the first $r$ vectors. Thus the same conclusion holds for the first $r$ vectors.
\end{proof}

\subsection{Proof of Theorem~\ref{thm:independence}}\label{sec:dependence}
\begin{lemma}[Equal or opposite columns]\label{lem:pairs}
Let $M$ be an $n\times r$ matrix with independent uniform sign entries, and let $F$ be the event that two columns are equal up to a sign. Then
\begin{equation}\label{eq:pair-probability}
\mathbb P(F)=2\binom r2\,2^{-n}+O(r^4 4^{-n}).
\end{equation}
\end{lemma}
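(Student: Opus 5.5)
The plan is a second-order Bonferroni (inclusion--exclusion) argument over the pairs of columns. For $1\le i<j\le r$ let $F_{ij}$ be the event that $v_i=\pm v_j$, so that $F=\bigcup_{i<j}F_{ij}$. First compute each single-pair probability. Conditional on $v_i$, the column $v_j$ is uniform and independent, and $v_i\ne -v_i$. Hence
\[
\mathbb P(F_{ij})=2\cdot2^{-n}.
\]
Summing over the $\binom r2$ pairs gives the main term $2\binom r2 2^{-n}$. The Bonferroni inequalities then give
\[
\sum_{i<j}\mathbb P(F_{ij})-\sum_{\{i,j\}\ne\{k,l\}}\mathbb P(F_{ij}\cap F_{kl})\le\mathbb P(F)\le\sum_{i<j}\mathbb P(F_{ij}).
\]
So it suffices to show that each pairwise intersection has probability at most $4\cdot4^{-n}$. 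There are at most $\binom r2^2\le r^4$ such pairs of pairs, which yields the error term $O(r^44^{-n})$.

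For the intersections we split into two cases.

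\emph{Disjoint pairs.} If $\{i,j\}\cap\{k,l\}=\emptyset$, then $F_{ij}$ and $F_{kl}$ depend on disjoint sets of independent columns. They are therefore independent, and the intersection has probability exactly $4\cdot4^{-n}$.

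\emph{Pairs sharing an index.} If the pairs share one index, say $\{i,j\}$ and $\{i,l\}$, condition on $v_i$. Then $v_j$ and $v_l$ are independent and uniform, and each must lie in the two-element set $\{\pm v_i\}$. Conditionally the two events are independent, each with probability $2\cdot2^{-n}$, so again
\[
\mathbb P(F_{ij}\cap F_{il})=4\cdot4^{-n}.
\]
Two distinct pairs cannot share both indices, so these two cases are exhaustive.

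Combining the cases gives $\mathbb P(F)=2\binom r22^{-n}+O(r^44^{-n})$ with an absolute implied constant (in fact at most $4$). There is no real obstacle here. The only points needing care are the conditioning in the shared-index case and confirming that the factor $2$ counts the distinct vectors $\pm v_i$ exactly, which holds because a sign vector is never equal to its own negative.
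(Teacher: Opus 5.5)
Your proposal is correct and follows the same second-order inclusion--exclusion (Bonferroni) argument as the paper. The only difference is cosmetic. You group the two sign relations for each pair into one event $F_{ij}$ of probability $2\cdot2^{-n}$, using $v_i\ne -v_i$, with pairwise intersections exactly $4\cdot4^{-n}$. The paper instead works with the $2\binom r2$ signed relations separately, each of probability $2^{-n}$, with pairwise intersections at most $4^{-n}$.
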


\begin{proof}
Each of the $2\binom r2$ prescribed sign relations has probability $2^{-n}$. Two distinct relations have intersection probability at most $4^{-n}$. Inclusion--exclusion gives~\eqref{eq:pair-probability}.
\end{proof}

Following~\cite[Definition~3.1]{JSSsing}, let $\Cons(\delta,\rho)$ consist of the unit vectors $a\in\R^r$ such that, for some $\lambda\in\R$,
\[
|a_i-\lambda|\le\frac{\rho}{\sqrt r}
\quad\text{for at least }(1-\delta)r\text{ indices }i.
\]
As before, $p(a)=\mathbb P\{\epsilon^{\tran}a=0\}$ for a uniform sign vector $\epsilon$.

\begin{lemma}[Null vectors with large concentration]\label{lem:large-concentration}
Let $M$ be an $r\times r$ matrix with independent uniform sign entries. Then
\begin{equation}\label{eq:large-concentration}
\mathbb P\bigl\{Ma=0\text{ for some }a\ne0,\quad
\supp(a)>2,\quad p(a)>(2/3)^r\bigr\}
=O(2^{-r}e^{-cr}),
\end{equation}
where $c>0$ and the implied constant are absolute.
\end{lemma}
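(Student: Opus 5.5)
We will follow the strategy of Jain--Sah--Sawhney~\cite{JSSsing}, as outlined in Section~\ref{sec:intro-dependence}. The null vectors are split according to their structure.

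\emph{Step 1: remove unstructured null vectors.} Fix small constants $\delta,\rho>0$. By the non-almost-constant estimates of~\cite{JSSsing}, the event that $M$ has a unit null vector outside $\Cons(\delta,\rho)$ with $p(a)>(2/3)^r$ has probability $O(2^{-r}e^{-cr})$. Their proof bounds exactly the event that some row annihilates a non-almost-constant vector of large concentration. It does this by a net argument over level sets of $p(a)$, together with the inverse Littlewood--Offord theorem, and the bound comes out as $(1/2-c)^r$. So it remains to treat null vectors $a\in\Cons(\delta,\rho)$ with $\supp(a)>2$.

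\emph{Step 2: reduce to the last row.} Apply the corank estimate of~\cite{JSSrank}. With probability $1-O(2^{-r}e^{-cr})$, the first $r-1$ rows have rank $r-1$, apart from events already covered by Lemma~\ref{lem:pairs}-type coincidences: two equal or opposite columns, which give null vectors of support two. Hence the kernel of the first $r-1$ rows is a line spanned by a vector $a^*$ determined by those rows. By row symmetry and a union bound over which row is last, losing a factor $r$ that is absorbed into $e^{-cr}$, it suffices to bound
\[
\mathbb P\bigl\{x^{\tran}a^*=0,\ a^*\in\Cons(\delta,\rho),\ \supp(a^*)>2\bigr\},
\]
where $x$ is the last row.

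\emph{Step 3: the almost-constant case.} If $a^*$ is almost constant with $\lambda\neq0$, then $x^{\tran}a^*=0$ forces $x$ to be nearly balanced: $|\sum_i x_i|\le C\sqrt r$, up to the $\delta r$ exceptional coordinates. JSS's conditional concentration estimate bounds the probability that a nearly balanced row annihilates a vector generated by the other rows with $\supp>2$. We will only use it in its conditional form given $x\in\mathcal B$, the nearly balanced set. The new point is a symmetry argument. Multiplying column $j$ by $-1$ preserves the law of $M$, maps the kernel vector $a^*$ to $D a^*$ where $D$ is a diagonal sign matrix, and maps $x$ to $Dx$. Averaging over all $D$ moves $x$ uniformly. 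This shows that the probability of $\{x^{\tran}a^*=0\}$ is unchanged by conditioning on $x$ lying in the balanced class relative to $a^*$. The conditional estimate then gives $O(2^{-r}e^{-cr})$. The case $\lambda\approx0$, where most coordinates are tiny, falls back to the unstructured analysis or to support considerations.

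\emph{Main obstacle.} The hardest step is Step 3: making the column-sign symmetry argument rigorous. The balanced condition depends on $a^*$, and $a^*$ itself changes under the sign flips. So the conditioning event must be defined equivariantly, as $\{Dx: D\}$-orbits relative to $Da^*$. We will check carefully that JSS's bound holds uniformly over this equivariant conditioning, rather than only for the fixed all-ones direction.
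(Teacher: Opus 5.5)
\textbf{There is a genuine gap in your Step~1, and the new argument is needed exactly there.} For Rademacher rows, \cite{JSSsing} does not supply an unconditional $(1/2-c)^r$ bound for non-almost-constant null vectors of large concentration. What it does supply (their Proposition~3.7, in the form the paper uses) is conditional. Let $A$ be the first $r-1$ rows and $a$ its kernel vector. Outside an event of probability $4^{-r}$ depending only on $A$, either $a\in\Cons(\delta,\rho)$ or $\mathbb P\{r_r^\tran a=0\mid A,\ r_r\in W_\gamma\}\le L(3/5)^r$, where $W_\gamma=\{\varepsilon\in\{-1,1\}^r:|\sum_i\varepsilon_i|\le2\gamma r\}$. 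This bound is conditioned on a balanced last row, and it is of size $(3/5)^r\gg2^{-r}$. So the non-almost-constant case is where the new work lies, and your proposal treats it as a citation.

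The missing argument has two parts.

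First, apply the symmetry to the whole event $\mathcal J=\{\rank A=r-1,\ \supp(a)>2,\ p(a)>(2/3)^r,\ r_r^\tran a=0\}$. This event is invariant under $(A,r_r)\mapsto(AD,Dr_r)$, because $a\mapsto\pm Da$ and $\supp$ and $p$ are unchanged. Hence $\mathbb P(\mathcal J\mid r_r=x)$ does not depend on $x$, and $\mathbb P(\mathcal J)=\mathbb P(\mathcal J\mid r_r\in W_\gamma)$ for the \emph{fixed} set $W_\gamma$. No equivariant conditioning is needed, so the obstacle you single out is not the real one.

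Second, the cutoff $(2/3)^r$ is what makes the conditional estimate useful. On $\mathcal J$ one has $L(3/5)^r\le L(9/10)^r p(a)$. Since $p(a)=\mathbb P\{r_r^\tran a=0\mid A\}$ and $A$ is independent of $r_r$, averaging over $A$ bounds the non-almost-constant part of $\mathbb P(\mathcal J\mid r_r\in W_\gamma)$ by $L(9/10)^r\mathbb P(\mathcal J)$. This term is then absorbed into the left side. The threshold $(2/3)^r$ plays no role anywhere in your argument, which signals that this step is missing.

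\textbf{Your Step~3 puts the symmetry where it is neither needed nor workable.} The class $\Cons(\delta,\rho)$ is not invariant under column sign flips, since $Da^*$ is typically far from constant; this is the source of your equivariance difficulty. Moreover, the conditional estimate of \cite{JSSsing} concerns non-almost-constant vectors, so it gives nothing for $a^*\in\Cons(\delta,\rho)$.

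That case is a direct citation. \cite[Theorem~1.5]{JSSsing} bounds almost-constant null vectors by $2\binom r2 2^{-r}+O(2^{-r}e^{-cr})$. On $\{\rank M=r-1\}$ the kernel is a line, so subtracting the pair probability of Lemma~\ref{lem:pairs} leaves $O(2^{-r}e^{-cr})$ for those with $\supp(a)>2$. In the symmetrized bound, this piece costs only the factor $1/\mathbb P(W_\gamma)\le2$.

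\textbf{Step~2 also needs repair.} $A$ fails to have rank $r-1$ with probability of order $r^22^{-r}$ (two equal rows). Equal rows of $A$ do not produce support-two null vectors of $M$, so this event cannot be discarded as you propose. The correct reduction is the corank-two bound $\mathbb P\{\rank M\le r-2\}\le(1/2+o(1))^{2r}$ from \cite{JSSrank}. Then, on $\{\rank M=r-1\}$, take a union bound over a row whose removal leaves $r-1$ independent rows, with $\rank A=r-1$ built into the event.
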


\begin{proof}
The estimate in the remark following~\cite[Theorem~1.1]{JSSrank}, with deficiency two, gives
\begin{equation}\label{eq:corank-two}
\mathbb P\{\rank M\le r-2\}
\le(1/2+o(1))^{2r}=O(2^{-r}e^{-cr}).
\end{equation}
It remains to consider rank $r-1$.

By~\cite[Theorem~1.5]{JSSsing}, there are $\delta,\rho>0$ for which
\[
\mathbb P\{Ma=0\text{ for some }a\in\Cons(\delta,\rho)\}
\le 2\binom r2\,2^{-r}+O(2^{-r}e^{-cr}).
\]
For large $r$, every unit vector of support two belongs to $\Cons(\delta,\rho)$. At rank $r-1$, the nullspace is one-dimensional, so a relation of support two excludes any relation with larger support. Subtracting the pair probability~\eqref{eq:pair-probability} with $n=r$ therefore isolates the nontrivial almost-constant relations, up to the exception for matrices of rank at most $r-2$ bounded in~\eqref{eq:corank-two}. This gives
\begin{equation}\label{eq:nontrivial-constant}
\mathbb P\left\{\begin{array}{c}
\rank M=r-1,\quad Ma=0\text{ for some }a\in\Cons(\delta,\rho),\\
\supp(a)>2
\end{array}\right\}
=O(2^{-r}e^{-cr}).
\end{equation}

\medskip
\noindent\emph{Symmetry under changes of column signs.}
Write the rows of $M$ as $r_1^{\tran},\ldots,r_r^{\tran}$. Let $A$ consist of the first $r-1$ rows, and choose a unit vector $a$ in its nullspace using $A$ alone. Proposition~3.7 of~\cite{JSSsing} supplies constants $\gamma,L>0$ such that, outside an event of probability at most $4^{-r}$ depending only on $A$, either $a\in\Cons(\delta,\rho)$ or
\begin{equation}\label{eq:balanced-row}
\mathbb P\{r_r^{\tran}a=0\mid A,\ r_r\in W_\gamma\}
\le L(3/5)^r,
\qquad
W_\gamma=\left\{\varepsilon\in \{-1,1\}^r:\left|\sum_{i=1}^r\varepsilon_i\right|\le2\gamma r\right\}.
\end{equation}
For uniform signs, taking the parameter in that proposition to be $\log(6/5)$ gives the threshold $(3/5)^r$. The definitions in~\cite[Definitions~2.2 and~3.6]{JSSsing} convert this threshold bound into the bound for the atom at zero in~\eqref{eq:balanced-row}. To control the cost of conditioning, we also use $\mathbb P\{r_r\in W_\gamma\}\ge1/2$ for sufficiently large $r$, which follows from Chebyshev's inequality.

Set
\[
\mathcal J=\{\rank A=r-1,\quad\supp(a)>2,\quad
p(a)>(2/3)^r,\quad r_r^{\tran}a=0\}.
\]
On $\{\rank A=r-1\}$, the vector $a$ is unique up to sign, so changing column signs preserves $\mathcal J$. These transformations also preserve the law of $M$ and act transitively on the possible last rows. The conditional probability of $\mathcal J$ is therefore the same for every value of the last row. This uses invariance of $\mathcal J$, not of the class of almost-constant vectors, and in particular gives
\begin{equation}\label{eq:sign-invariance}
\mathbb P(\mathcal J)=\mathbb P(\mathcal J\mid r_r\in W_\gamma).
\end{equation}
When $p(a)>(2/3)^r$, the right side of~\eqref{eq:balanced-row} is at most $L(9/10)^r p(a)$. Use~\eqref{eq:sign-invariance} and separate the almost-constant case and the exceptional event in~\eqref{eq:balanced-row}. Averaging over $A$, which is independent of the last row, then bounds the remaining contribution by $L(9/10)^r\mathbb P(\mathcal J)$. Since the conditioning event has probability at least $1/2$, the almost-constant contribution costs at most a factor two, giving
\begin{align*}
\mathbb P(\mathcal J)
&\le 2\mathbb P\{\mathcal J,\ a\in\Cons(\delta,\rho)\}
    +4^{-r}+L(9/10)^r\mathbb P(\mathcal J)\\
&\le O(2^{-r}e^{-cr})+L(9/10)^r\mathbb P(\mathcal J).
\end{align*}
The second line follows from~\eqref{eq:nontrivial-constant} because the full matrix has rank $r-1$ on $\mathcal J$. For sufficiently large $r$, $L(9/10)^r\le1/2$, so moving the last term to the left gives
\[
\mathbb P(\mathcal J)=O(2^{-r}e^{-cr}).
\]
Every matrix of rank $r-1$ has $r-1$ independent rows, so a union bound over the $r$ possible omitted rows covers this case. The factor $r$ is absorbed by decreasing $c$. Adding the contribution from matrices of rank at most $r-2$ in~\eqref{eq:corank-two} proves~\eqref{eq:large-concentration}.
\end{proof}

\begin{proof}[Proof of Theorem~\ref{thm:independence}]
Let $M=(v_1,\ldots,v_r)$ and let $F$ be the event in Lemma~\ref{lem:pairs}. Let $E$ be the event that $Ma=0$ for some $a$ with $\supp(a)>2$. Relations of support one are impossible, and those of support two are precisely the relations counted by $F$. Hence
\[
0\le\mathbb P\{\rank M<r\}-\mathbb P(F)\le\mathbb P(E).
\]
Adding columns preserves $E$ because any relation extends by assigning zero coefficients to the new columns. It therefore suffices to bound $\mathbb P(E)$ for $r=n-1$.

For this value of $r$, Lemma~\ref{lem:subspace-concentration}, with $d=r$ and $n$ rows, gives
\[
\mathbb P\bigl\{Ma=0\text{ for some }a\ne0,\quad p(a)\le(2/3)^r\bigr\}\le \binom n2(4/9)^r.
\]
Indeed, as in the proof of Lemma~\ref{lem:subspace-concentration}, a common null vector with concentration at most $(2/3)^r$ is chosen using only a fixed set of $r-1$ rows, whenever such a vector exists. The remaining two rows are independent of this choice and of each other, so the probability that both annihilate the vector is at most $(4/9)^r$. The union over the choices of rows accounts for $\binom n2$. Any relation with $\supp(a)>2$ and $p(a)>(2/3)^r$ must also annihilate the first $r$ rows, so Lemma~\ref{lem:large-concentration} bounds its occurrence probability. Combining the two cases gives
\[
\mathbb P(E)\le\binom n2(4/9)^r+O(2^{-r}e^{-cr})
=O(2^{-n}e^{-cn}),
\]
since $r=n-1$ and $4/9<1/2$. The monotonicity established above extends this bound to every $r\le n-1$. Combining it with~\eqref{eq:pair-probability} proves the theorem after absorbing $r^4 4^{-n}$ into the error term.
\end{proof}

\end{document}